\documentclass[11pt,letterpaper]{amsart}
\usepackage{amsfonts}
\usepackage{amsmath}
\usepackage{amsthm}
\usepackage{amssymb}
\usepackage{amsmath,amscd}
\usepackage{graphicx}
\usepackage{enumerate}
\usepackage{subcaption}
\usepackage[pdffitwindow=false,
colorlinks=true,linkcolor=black,urlcolor=black,citecolor=black]{hyperref}
\usepackage{tabularx}
\usepackage{float}
\pagestyle{plain}

\usepackage{float}
\usepackage{multirow}

\newtheorem{theorem}{Theorem}[section]
\newtheorem{lemma}[theorem]{Lemma}
\theoremstyle{definition}

\newtheorem{proposition}[theorem]{Proposition}

\newtheorem{remark}[theorem]{Remark}
\numberwithin{equation}{section}
%

\newcommand{\spin}{\mathrm{Spin}}
\newcommand{\trace}{\mathrm{tr}}
\newcommand{\diag}{\mathrm{diag}}

\title{Cohomogeneity one $\spin(7)$ metrics with generic Aloff--Wallach spaces as principal orbits}
\author{Hanci Chi}
\address{Department of Pure Mathematics\\ Xi'an Jiaotong-Liverpool University\\ Suzhou 215123\\ China}
\email{hanci.chi@xjtlu.edu.cn}
\begin{document}
\maketitle
\begin{abstract}
This paper establishes the existence of forward complete cohomogeneity one $\mathrm{Spin}(7)$ metrics with generic Aloff--Wallach spaces $N_{k,l}$ as principal orbits and $\mathbb{CP}^2$ as the singular orbit, building on Reidegeld’s analysis of the initial value problem. We construct three continuous one-parameter families of non-compact $\mathrm{Spin}(7)$ metrics. Each family contains a limiting asymptotically conical (AC) metric, while the other metrics in the families are asymptotically locally conical (ALC). Moreover, two of the AC metrics share the same asymptotic cone, exhibiting a geometric transition phenomenon analogous to that found by Lehmann in the exceptional case.
\end{abstract}
\let\thefootnote\relax\footnote{2020 Mathematics Subject Classification: 53C25 (primary).

Keywords: $\spin(7)$ metric, cohomogeneity one metric.

The author is supported by the NSFC (No. 12301078), the Natural Science Foundation of Jiangsu Province (BK-20220282), and the XJTLU Research Development Funding (RDF-21-02-083).}

\section{Introduction}
Metrics with $\spin(7)$ holonomy are of great interest in both differential geometry and theoretical physics. The first example was constructed in \cite{bryant_metrics_1987} on the cone over the Berger space $SO(5)/SO(3)$. The first complete example was constructed in \cite{bryant_construction_1989}, defined on the spinor bundle over $\mathbb{S}^4$. The first compact example was given in \cite{joyce_new_1999}. The compact examples rely on resolving orbifold singularities followed by delicate analytic perturbation, whereas most non-compact examples are obtained by exploiting symmetry, which reduces the $\spin(7)$ equations to a system of ODEs. These examples are of cohomogeneity one.

In this paper, we follow the latter approach to seek further examples of $\spin(7)$ metrics, where the geometry is determined by the choice of principal orbit. Among the possible homogeneous 7-manifolds, Aloff--Wallach spaces stand out as ideal candidates for constructing new $\spin(7)$ metrics. An Aloff--Wallach space is a homogeneous space $N_{k,l}:=SU(3)/U(1)_{k,l}$, where $U(1)_{k,l}$ is embedded in $SU(3)$ as $$\diag\left(e^{k\sqrt{-1}t},e^{l\sqrt{-1}t},e^{-(k+l)\sqrt{-1}t}\right).$$ Without loss of generality, we set $(k,l)$ to be non-negative and coprime. Due to the geometric differences, an Aloff--Wallach space is \emph{exceptional} if  $kl(k-l)= 0$ and \emph{generic} if otherwise. Aloff--Wallach spaces possess remarkably rich topological and geometric structures. Setting $\Delta=k^2+kl+l^2$, we have 
$$
H^4(N_{k,l},\mathbb{Z})=\mathbb{Z}/\Delta\mathbb{Z},
$$
which shows that there exist infinitely many homotopy types among them. Moreover, there are known examples of homeomorphic but non-diffeomorphic Aloff--Wallach spaces \cite{kreck1991some}. An Aloff--Wallach space can be regarded as a principal $\mathbb{S}^1$-bundle over $SU(3)/T^2$, whose bundle structure depends on the chosen embedding of $U(1)_{k,l}\subset SU(3)$. Alternatively, each $N_{k,l}$ can also be viewed as lens space bundles
$$\mathbb{S}^3/\mathbb{Z}_i\hookrightarrow N_{k,l}\rightarrow \mathbb{CP}^2,\quad i\in\{k + l,l,k\},$$
giving rise to three topologically distinct cohomogeneity one orbifold $M_{k,l}^{i}$, each as an $\mathbb{R}^4/\mathbb{Z}_i$-bundle over $\mathbb{CP}^2$. By \cite{wang_some_1982, castellani1984n, page1984new, kowalski_homogeneous_1993}, each generic Aloff--Wallach space admits two non-isometric $SU(3)$-invariant Einstein metrics. The Euclidean metric cones over these homogeneous Einstein metrics are $\spin(7)$ \cite{bar1993real}. This naturally motivates the study of cohomogeneity one $\spin(7)$ metrics on $M_{k,l}^i$.

For the exceptional cases, the picture is largely understood. For $N_{1,1}$, the orbifold $M_{1,1}^2$ admits the Bryant--Salamon metric in \cite{bryant_construction_1989}, which belongs to the $\mathbb{B}_8$ family introduced numerically in \cite{cvetic_cohomogeneity_2002} and later proved to exist in \cite{bazaikin_new_2007,bazaikin_noncompact_2008}. The space $M_{1,1}^1$ is the manifold $T^*\mathbb{CP}^2$, and it carries the Calabi metric \cite{calabi1979metriques}, which is HyperK\"ahler. The metric appears as the limiting AC metric of the one-parameter family of ALC $\spin(7)$ metrics constructed in \cite{chi2022spin}. For $N_{1,0}$, it was conjectured in \cite{cvetic_cohomogeneity_2002} and \cite{gukov2003conifold} that there exist two topologically distinct resolutions of the same $\spin(7)$ cone. The conjecture was later confirmed in \cite{lehmann2022geometric}, where two topologically different continuous families of ALC $\spin(7)$ metrics were constructed: one has singular orbit $\mathbb{S}^5$, and the other has $\mathbb{CP}^2$. Each family contains an AC limiting metric, and the two AC metrics share the same asymptotic cone based on the unique $SU(3)\times U(1)$-invariant nearly parallel $G_2$ structure on $N_{1,0}$.

In contrast, the situation for generic Aloff--Wallach spaces remains less understood. By Reidegeld’s local analysis in \cite{reidegeld2011exceptional}, a singular orbit as $\mathbb{S}^5$ can only occur in the $N_{1,0}$ case. The main interest lies in forward complete examples where $N_{k,l}$ collapses to $\mathbb{CP}^2$. Explicit isolated examples were obtained in \cite{cvetic_cohomogeneity_2002, kanno_spin7_2002-1}. The generic case was further investigated in \cite{chi2022spin}, where we proved the existence of a continuous one-parameter family of ALC $\spin(7)$ metrics on $ M_{k,l}^{k+l}$ and $ M_{k,l}^{k}$ with an artificial assumption $k> l$.

The present paper serves as a sequel to \cite{chi2022spin} in two aspects. Firstly, we reconstruct an invariant set that applies to all coprime pairs $(k,l)$. Being derived from comparing the metric components of the $\mathbb{S}^1$-fiber and the $SU(3)/T^2$ base, the new invariant set is more geometrically motivated. Secondly, we construct another invariant set, where the $\mathbb{S}^1$-fiber blows up. The limiting integral curves that stay between these two invariant sets represent AC $\spin(7)$ metrics, which are analogous to the Lehmann's AC metrics in the $N_{1,0}$ case. 

\begin{theorem}
\label{thm: main 1}
For any coprime pair $(k,l)$, on each $M_{k,l}^{i}$ with $i\in \{k+l,l,k\}$, there exists a continuous one-parameter family of forward complete 
$\spin(7)$ metrics $$\left\{\gamma^i_{\theta}\mid \theta\in (0,\theta_i]\right\}.$$
For $\theta\in(0,\theta_i)$, the metric $\gamma^i_{\theta}$ is asymptotically locally conical (ALC), with its ALC limit modeled on an $ \mathbb{S}^1 $-bundle over the $G_2$ cone on the nearly K\"ahler $ SU(3)/T^2$. For the endpoint $\theta_i$, the limiting metric $\gamma^i_{\theta_i}$ is asymptotically conical (AC). The AC limits of $\gamma^k_{\theta_k}$ and  $\gamma^l_{\theta_l}$ are modeled on a homogeneous Einstein metric on $N_{k,l}$, whereas the AC limit of $\gamma^{k+l}_{\theta_{k+l}}$ is modeled on another homogeneous Einstein metric on $N_{k,l}$.
\end{theorem}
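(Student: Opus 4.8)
The plan is to reduce the $\spin(7)$ holonomy condition to an autonomous first-order ODE system and then run a shooting-type argument from the singular orbit. Writing the cohomogeneity one metric as $dt^2+g_t$ with $g_t$ an $SU(3)$-invariant metric on $N_{k,l}$, such a $g_t$ is determined by four positive functions of $t$: one for the $\mathbb{S}^1$-fibre direction of $N_{k,l}\to SU(3)/T^2$ and one for each of the three pairwise inequivalent summands of the isotropy representation (inequivalence being exactly the genericity $kl(k-l)\ne 0$). The $\spin(7)$ condition is then equivalent to the Hitchin flow for the associated family of $G_2$ structures on $N_{k,l}$, giving a polynomial ODE system in these four coefficients and their $t$-derivatives. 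After the usual projectivization --- rescaling by an overall length so that the system descends to a compact region and $t\to\infty$ becomes its boundary --- I would record the critical points: a single ALC critical point, at which the $\mathbb{S}^1$-fibre stays bounded while the remaining directions open up into the $G_2$ cone over the nearly K\"ahler metric on $SU(3)/T^2$; and two AC critical points, corresponding to the two $SU(3)$-invariant Einstein metrics on $N_{k,l}$. Reidegeld's analysis supplies, for each bundle structure $i\in\{k+l,l,k\}$, the smooth local solutions issuing from the singular orbit $\mathbb{CP}^2$: a continuous one-parameter family $\gamma^i_\theta$ leaving a fixed boundary point, with $\theta$ a normalized ratio of leading metric coefficients along $\mathbb{CP}^2$; continuity of the family in $\theta$ then follows from continuous dependence on initial conditions.

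Next I would construct two flow-invariant sets. The first, $\mathcal{S}_1$, is cut out by inequalities comparing the metric component of the $\mathbb{S}^1$-fibre with those of the $SU(3)/T^2$ base; this is the geometric reformulation of the region from \cite{chi2022spin}, now tailored to be invariant for every coprime $(k,l)$, and it contains the ALC critical point in its closure. The second, $\mathcal{S}_2$, is the region in which the $\mathbb{S}^1$-fibre blows up relative to the base. In each case invariance is verified by a face-by-face sign analysis of the ODE vector field along the boundary. I then expect that for $\theta$ small the curve $\gamma^i_\theta$ is trapped in $\mathcal{S}_1$ --- hence confined to a compact region, so forward complete, and, having nowhere else to go, convergent to the ALC critical point, giving an ALC metric with the stated model --- while for $\theta$ large it enters $\mathcal{S}_2$, where it is again forward complete but the fibre grows without bound. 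I set $\theta_i$ to be the supremum of the $\theta$ for which $\gamma^i_\theta$ stays in $\mathcal{S}_1$; it is positive and finite, and every $\gamma^i_\theta$ with $\theta\in(0,\theta_i)$ is forward complete and ALC.

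For the endpoint I would argue that $\gamma^i_{\theta_i}$ cannot enter the interior of either $\mathcal{S}_1$ or $\mathcal{S}_2$ in finite time: if it entered $\mathrm{int}\,\mathcal{S}_1$ then by continuous dependence so would $\gamma^i_\theta$ for $\theta$ slightly larger than $\theta_i$, contradicting the definition of $\theta_i$; and if it entered $\mathrm{int}\,\mathcal{S}_2$ then so would $\gamma^i_\theta$ for $\theta$ slightly smaller than $\theta_i$, which is impossible since those curves stay in $\mathcal{S}_1$ while $\mathcal{S}_2$ is invariant. Hence $\gamma^i_{\theta_i}$ is trapped in the closed region between $\mathcal{S}_1$ and $\mathcal{S}_2$ --- in particular in a compact set, so it is forward complete --- and a Lyapunov function built from the same quantities that define the two invariant sets then forces its $\omega$-limit set to be one of the AC critical points, so $\gamma^i_{\theta_i}$ is AC with cone a homogeneous Einstein metric on $N_{k,l}$. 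To identify that metric, I would use the inner automorphism of $SU(3)$ swapping the first two coordinates: it carries $U(1)_{k,l}$ to $U(1)_{l,k}$, fixes the $i=k+l$ bundle structure and swaps the $i=k$ and $i=l$ structures, so it intertwines the families $\gamma^k_\theta$ and $\gamma^l_\theta$; consequently $\theta_k=\theta_l$ and $\gamma^k_{\theta_k}$, $\gamma^l_{\theta_l}$ share the same AC cone, while matching the limiting coefficient ratios of $\gamma^{k+l}_{\theta_{k+l}}$ against the two explicit Einstein metrics shows it converges to the other one --- the shared-cone transition in the spirit of Lehmann's $N_{1,0}$ example.

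The main obstacle is the construction and verification of $\mathcal{S}_1$ and $\mathcal{S}_2$. Their defining inequalities must be chosen so that the vector field points inward along every boundary face --- a delicate, case-by-case sign computation that has to hold uniformly in the coprime pair $(k,l)$ --- while simultaneously being tight enough to both confine a trapped trajectory to a compact region and pin down its limiting critical point, and loose enough that Reidegeld's initial data lies in $\mathcal{S}_1$ for small $\theta$ and escapes into $\mathcal{S}_2$ for large $\theta$. Closely tied to this is the endpoint analysis: excluding that $\gamma^i_{\theta_i}$ degenerates in finite time or limits to a non-isolated invariant set rather than converging cleanly to an AC critical point, which is precisely where the ``trapped between two invariant sets'' structure and the accompanying Lyapunov function must do their work. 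Once convergence to the respective critical points is in hand, the decay estimates that make the words ``ALC'' and ``AC'' precise follow from linearizing the compactified system at those points.
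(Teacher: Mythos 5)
Your overall architecture---reformulating the $\spin(7)$ condition as a polynomial ODE system, rescaling to get a bounded autonomous flow, constructing two flow-invariant regions (one containing the ALC critical point, one where the $\mathbb{S}^1$-fibre blows up), trapping small-$\theta$ curves in the first and large-$\theta$ curves in the second, defining $\theta_i$ as the supremum of the ALC range, and arguing that $\gamma^i_{\theta_i}$ is squeezed between the two invariant sets and hence converges to an AC critical point---matches the paper closely. The sets you call $\mathcal{S}_1$ and $\mathcal{S}_2$ are the paper's $\mathcal{D}^\pm$ and $\mathcal{B}^\pm$, the monotone quantity the paper uses is $Z_4/(Z_1^2Z_2^2Z_3^2)$, and the endpoint ``cannot enter either interior'' argument via continuous dependence is verbatim the paper's Lemma~\ref{lem_AC metrics}.

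However, there is a genuine gap in your identification of \emph{which} Einstein cone each endpoint metric limits to. You propose to use the inner automorphism of $SU(3)$ permuting the first two diagonal entries. But this conjugation carries $U(1)_{k,l}$ to $U(1)_{l,k}$, i.e.\ it is an equivariant diffeomorphism $N_{k,l}\to N_{l,k}$ carrying $M_{k,l}^i$ to $M_{l,k}^i$ for each $i$ (fixing the $a$-summand and exchanging the $b$ and $c$ summands). It does \emph{not} give an automorphism of $N_{k,l}$ within a fixed coprime pair exchanging the $i=k$ and $i=l$ lens space bundle structures---these are topologically distinct orbifolds over the same $N_{k,l}$, as the paper emphasizes in the discussion after Theorem~\ref{thm: main 1}. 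Consequently the claim ``$\theta_k=\theta_l$ and the families are intertwined'' does not follow. Your fallback of ``matching limiting coefficient ratios against the two explicit Einstein metrics'' also does not go through uniformly in $(k,l)$: those ratios solve quartic equations (the paper explicitly flags this as the obstacle to writing down $P_{AC}^\pm$ in closed form).

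What the paper does instead is cleaner and is the missing key observation: the $\spin(7)$ system splits into two chirality branches $\mathcal{C}^+_{\spin(7)}$ and $\mathcal{C}^-_{\spin(7)}$, and Proposition~\ref{prop: spin7_cone_chirality} shows that the two $\spin(7)$ cones over $N_{k,l}$ have \emph{opposite} chirality, giving exactly one AC critical point $P_{AC}^+$ on $\mathcal{C}^+$ and one $P_{AC}^-$ on $\mathcal{C}^-$. Since the initial critical point $P_0^{k+l}$ lies on $\mathcal{C}^+$ while $P_0^l, P_0^k$ lie on $\mathcal{C}^-$, and each branch is invariant, the curve $\gamma^{k+l}_{\theta_{k+l}}$ can only reach $P_{AC}^+$ while $\gamma^l_{\theta_l}$ and $\gamma^k_{\theta_k}$ can only reach $P_{AC}^-$. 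This chirality dichotomy is what produces the shared asymptotic cone for $M_{k,l}^k$ and $M_{k,l}^l$ and hence the Lehmann-type transition; you would need to prove something equivalent to it for your argument to close.
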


The spirit of Lehmann’s metrics is reflected in the main theorem above. For each coprime pair $(k,l)$, the orbifolds $M_{k,l}^k$ and $M_{k,l}^l$	are topologically distinct, both carrying AC $\spin(7)$ metrics that share the same asymptotic cone. In particular, if $1\in\{k,l\}$, one of the resolutions of the $\spin(7)$ cone on $N_{k,l}$ yields a smooth manifold $M_{k,l}^1$, whereas the other still has an orbifold singularity.

This paper is organized as follows. In Section \ref{sec: Ricci-flat Equation}, we briefly recall the setup and main equations from our previous work \cite{chi2022spin}. Section \ref{sec: local analysis} reviews the local analysis developed therein. These two sections are included mainly for completeness, and readers familiar with the subject may safely skip them. In Section \ref{sec: invariant set 1}, we construct invariant sets that lead to the existence of one-parameter families of ALC $\spin(7)$ metrics on each $M_{k,l}^i$. In Section \ref{sec: invariant set 2}, we construct another invariant set, from which the existence of AC $\spin(7)$ metrics on $M_{k,l}^i$ follows.

%

\section{The $\spin(7)$ holonomy cohomogeneity one system}
\label{sec: Ricci-flat Equation}
The cohomogeneity one Ricci-flat equations derived in \cite{reidegeld2011exceptional} were reformulated in \cite{chi2022spin} as a dynamical system in the variables
$$(X_1,X_2,X_3,X_4,Z_1,Z_2,Z_3,Z_4)$$ constrained to an algebraic surface. Here the variables $X_j$ represent the normalized principal curvatures, while the variables $Z_j$ correspond normalized metric components. In the following, we give a brief presentation of the dynamical system and the definitions of $(X_j,Z_j)$. For full details and derivations, the reader is referred to \cite{reidegeld2011exceptional, chi2022spin}.

For an Aloff–Wallach space $N_{k,l}$, we fix a basis for $\mathfrak{su}(3)$ as in \cite[(4.3)]{reidegeld2011exceptional}. The isotropy representation $\mathfrak{su}(3)/\mathfrak{u}(1)_{k,l}$ is decomposed as 
\begin{equation}
\label{eqn_isotropy representation}
\mathfrak{su}(3)/\mathfrak{u}(1)_{k,l}=\mathbf{2}_{k-l}\oplus  \mathbf{2}_{2k+l}\oplus  \mathbf{2}_{k+2l}\oplus \mathbf{1},
\end{equation}
where each subscript denotes the corresponding $\mathfrak{u}(1)_{k,l}$-weight. With our convention that $k$ and $l$ are non-negative coprime integers, the above four irreducible summands are pairwise inequivalent if and only if 
$$(k,l)\notin\{(1,0),(0,1),(1,1)\},$$ 
which is the generic case for Aloff–Wallach spaces. By the $SU(3)$-action, an invariant metric $g_{N_{k,l}}$ on the principal orbit is determined by a positive definite symmetric bilinear form on $\mathfrak{su}(3)/\mathfrak{u}(1)_{k,l}$. Hence, the matrix representation of $g_{N_{k,l}}$ takes the form
\begin{equation}
\label{eqn: invariant metric}
\begin{bmatrix}
a^2&0&&&&&\\
0&a^2&&&&&&\\
&&b^2&0&&&\\
&&0&b^2&&&\\
&&&&c^2&0&\\
&&&&0&c^2&\\
&&&&&&f^2
\end{bmatrix},
\end{equation}
where $a^2$, $b^2$, $c^2$, and $f^2$ respectively correspond to irreducible summands in \eqref{eqn_isotropy representation}.
Consider a cohomogeneity one metric of the form 
$$g=dt^2+g_{N_{k,l}}(t),$$ where components in \eqref{eqn: invariant metric} are now functions of $t$. We follow \cite{eschenburg_initial_2000, reidegeld2011exceptional} to obtain the cohomogeneity one Ricci-flat system:
\begin{equation}
\label{eqn: original Einstein equation}
\begin{split}
\frac{\ddot{a}}{a}-\left(\frac{\dot{a}}{a}\right)^2&=-\left(2\frac{\dot{a}}{a}+2\frac{\dot{b}}{b}+2\frac{\dot{c}}{c}+\frac{\dot{f}}{f}\right)\frac{\dot{a}}{a}+\frac{6}{a^2}+\frac{a^2}{b^2c^2}-\frac{b^2}{a^2c^2}-\frac{c^2}{a^2b^2}-\frac{1}{2}\frac{(k+l)^2}{\Delta^2}\frac{f^2}{a^4},\\
\frac{\ddot{b}}{b}-\left(\frac{\dot{b}}{b}\right)^2&=-\left(2\frac{\dot{a}}{a}+2\frac{\dot{b}}{b}+2\frac{\dot{c}}{c}+\frac{\dot{f}}{f}\right)\frac{\dot{b}}{b}+\frac{6}{b^2}+\frac{b^2}{a^2c^2}-\frac{c^2}{a^2b^2}-\frac{a^2}{b^2c^2}-\frac{1}{2}\frac{l^2}{\Delta^2}\frac{f^2}{b^4},\\
\frac{\ddot{c}}{c}-\left(\frac{\dot{c}}{c}\right)^2&=-\left(2\frac{\dot{a}}{a}+2\frac{\dot{b}}{b}+2\frac{\dot{c}}{c}+\frac{\dot{f}}{f}\right)\frac{\dot{c}}{c}+\frac{6}{c^2}+\frac{c^2}{a^2b^2}-\frac{a^2}{b^2c^2}-\frac{b^2}{a^2c^2}-\frac{1}{2}\frac{k^2}{\Delta^2}\frac{f^2}{c^4},\\
\frac{\ddot{f}}{f}-\left(\frac{\dot{f}}{f}\right)^2&=-\left(2\frac{\dot{a}}{a}+2\frac{\dot{b}}{b}+2\frac{\dot{c}}{c}+\frac{\dot{f}}{f}\right)\frac{\dot{f}}{f}+\frac{1}{2}\frac{(k+l)^2}{\Delta^2}\frac{f^2}{a^4}+\frac{1}{2}\frac{l^2}{\Delta^2}\frac{f^2}{b^4}+\frac{1}{2}\frac{k^2}{\Delta^2}\frac{f^2}{c^4},
\end{split}
\end{equation}
with a conservation law
\begin{equation}
\label{eqn: original conservation}
\begin{split}
&\left(2\frac{\dot{a}}{a}+2\frac{\dot{b}}{b}+2\frac{\dot{c}}{c}+\frac{\dot{f}}{f}\right)^2-2\left(\frac{\dot{a}}{a}\right)^2-2\left(\frac{\dot{b}}{b}\right)^2-2\left(\frac{\dot{c}}{c}\right)^2-\left(\frac{\dot{f}}{f}\right)^2\\
&=12\left(\frac{1}{a^2}+\frac{1}{b^2}+\frac{1}{c^2}\right)-2\left(\frac{a^2}{b^2c^2}+\frac{b^2}{a^2c^2}+\frac{c^2}{a^2b^2}\right)-\frac{1}{2}\frac{(k+l)^2}{\Delta^2}\frac{f^2}{a^4}-\frac{1}{2}\frac{l^2}{\Delta^2}\frac{f^2}{b^4}-\frac{1}{2}\frac{k^2}{\Delta^2}\frac{f^2}{c^4}.
\end{split}
\end{equation}

Singular orbits of $M_{k,l}^{k+l}$, $M_{k,l}^{l}$ and $M_{k,l}^{k}$ are respectively generated by $\mathbf{2}^{2k+l}\oplus \mathbf{2}^{k+2l}$, $\mathbf{2}^{k-l}\oplus \mathbf{2}^{k+2l}$ and $\mathbf{2}^{k-l}\oplus \mathbf{2}^{2k+l}$. By \cite{eschenburg_initial_2000, reidegeld2011exceptional}, the corresponding initial conditions are
\begin{equation}
\label{eqn: initial condtion}
\begin{split}
& \lim_{t\to 0}(a,b,c,f,\dot{a},\dot{b},\dot{c},\dot{f})=\left(0,a_0,a_0,0,1,0,0,\frac{2\Delta}{k+l}\right),\\
& \lim_{t\to 0}(a,b,c,f,\dot{a},\dot{b},\dot{c},\dot{f})=\left(b_0,0,b_0,0,0,1,0,\frac{2\Delta}{l}\right),\\
& \lim_{t\to 0}(a,b,c,f,\dot{a},\dot{b},\dot{c},\dot{f})=\left(c_0,c_0,0,0,0,0,1,\frac{2\Delta}{k}\right),\\
& a_0, b_0, c_0>0.
\end{split}
\end{equation}

The $\spin(7)$ equations derived in \cite{reidegeld2011exceptional} are
\begin{equation}
\label{eqn: original spin(7) equation}
\begin{split}
\frac{\dot{a}}{a}&=\frac{b}{ac}+\frac{c}{ab}-\frac{a}{bc}-\frac{k+l}{2\Delta}\frac{f}{a^2},\\
\frac{\dot{b}}{b}&=\frac{c}{ab}+\frac{a}{bc}-\frac{b}{ac}+\frac{l}{2\Delta}\frac{f}{b^2},\\
\frac{\dot{c}}{c}&=\frac{a}{bc}+\frac{b}{ac}-\frac{c}{ab}+\frac{k}{2\Delta}\frac{f}{c^2},\\
\frac{\dot{f}}{f}&=\frac{k+l}{2\Delta}\frac{f}{a^2}-\frac{l}{2\Delta}\frac{f}{b^2}-\frac{k}{2\Delta}\frac{f}{c^2}.
\end{split}
\end{equation}
Changing the sign of $f$ in \eqref{eqn: original spin(7) equation} yields the $\spin(7)$ condition with the opposite chirality:
\begin{equation}
\label{eqn: original opposite spin(7) equation}
\begin{split}
\frac{\dot{a}}{a}&=\frac{b}{ac}+\frac{c}{ab}-\frac{a}{bc}+\frac{k+l}{2\Delta}\frac{f}{a^2},\\
\frac{\dot{b}}{b}&=\frac{c}{ab}+\frac{a}{bc}-\frac{b}{ac}-\frac{l}{2\Delta}\frac{f}{b^2},\\
\frac{\dot{c}}{c}&=\frac{a}{bc}+\frac{b}{ac}-\frac{c}{ab}-\frac{k}{2\Delta}\frac{f}{c^2},\\
\frac{\dot{f}}{f}&=-\frac{k+l}{2\Delta}\frac{f}{a^2}+\frac{l}{2\Delta}\frac{f}{b^2}+\frac{k}{2\Delta}\frac{f}{c^2}.
\end{split}
\end{equation}

We follow the change of coordinates in \cite{chi2022spin}, which transforms the Ricci-flat system to a dynamical system on an algebraic surface. Let $L=\frac{1}{2}g_{N_{k,l}}g^{-1}_{N_{k,l}}=\diag\left(\frac{\dot{a}}{a}I_2,\frac{\dot{b}}{b}I_2,\frac{\dot{c}}{c}I_2,\frac{\dot{f}}{f}\right)$ be the shape operator of $N_{k,l}$ in $M_{k,l}^i$. Normalize the orbit space by $d\eta=\trace{L} dt$. Define functions
\begin{equation}
\label{eqn: Xi and Zi}
\begin{split}
&X_1=\frac{\frac{\dot{a}}{a}}{\trace{L}},\quad X_2=\frac{\frac{\dot{b}}{b}}{\trace{L}},\quad X_3=\frac{\frac{\dot{c}}{c}}{\trace{L}},\quad X_4=\frac{\frac{\dot{f}}{f}}{\trace{L}},\\
&Z_1=\frac{\frac{a}{bc}}{\trace{L}},\quad 
Z_2=\frac{\frac{b}{ac}}{\trace{L}},\quad 
Z_3=\frac{\frac{c}{ab}}{\trace{L}},\quad 
Z_4=f\trace{L}.
\end{split}
\end{equation}
Let $'$ denote the derivative with respect to $\eta$. The original dynamical system \eqref{eqn: original Einstein equation} is transformed to 
\begin{equation}
\label{eqn: new Einstein equation}
\begin{bmatrix}
X_1\\
X_2\\
X_3\\
X_4\\
Z_1\\
Z_2\\
Z_3\\
Z_4
\end{bmatrix}'
=\begin{bmatrix}
X_1(G-1)+R_1\\
X_2(G-1)+R_2\\
X_3(G-1)+R_3\\
X_4(G-1)+R_4\\
Z_1(G+X_1-X_2-X_3)\\
Z_2(G+X_2-X_3-X_1)\\
Z_3(G+X_3-X_1-X_2)\\
Z_4(-G+X_4)\\
\end{bmatrix},
\end{equation}
where
\begin{equation}
\begin{split}
&G=2X_1^2+2X_2^2+2X_3^2+X_4^2,\\
&R_1=6Z_2Z_3+Z_1^2-Z_2^2-Z_3^2-\frac{1}{2}\frac{(k+l)^2}{\Delta^2}Z_2^2Z_3^2Z_4^2,\\
&R_2=6Z_1Z_3+Z_2^2-Z_3^2-Z_1^2-\frac{1}{2}\frac{l^2}{\Delta^2}Z_1^2Z_3^2Z_4^2,\\
&R_3=6Z_1Z_2+Z_3^2-Z_1^2-Z_2^2-\frac{1}{2}\frac{k^2}{\Delta^2}Z_1^2Z_2^2Z_4^2,\\
&R_4=\frac{1}{2}\frac{(k+l)^2}{\Delta^2}Z_2^2Z_3^2Z_4^2+\frac{1}{2}\frac{l^2}{\Delta^2}Z_1^2Z_3^2Z_4^2+\frac{1}{2}\frac{k^2}{\Delta^2}Z_1^2Z_2^2Z_4^2.
\end{split}
\end{equation}
The conservation law \eqref{eqn: original conservation} becomes
\begin{equation}
\label{eqn: new conservation}
G-1+2R_1+2R_2+2R_3+R_4=0.
\end{equation}
Since $\left(\frac{1}{\trace(L)}\right)'=\frac{1}{\trace(L)}G$, the quantity $\frac{1}{\trace(L)}$ can be treated as a function of $\eta$ by
$$\frac{1}{\trace(L)}=\exp\left(\int_{\eta^*}^{\eta}G d\tilde{\eta}+C\right).$$
To recover the original coordinates, we simply compute
$$
t=\int_{\eta^*}^\eta \frac{1}{\trace(L)} d\tilde{\eta}=\int_{\eta^*}^\eta \exp\left(\int_{\eta^{**}}^{\eta^*}G d\tilde{\tilde{\eta}}+C\right) d\tilde{\eta}+t_0
$$
and 
$$
a=\frac{1}{\trace(L)}\frac{1}{\sqrt{Z_2Z_3}},\quad b=\frac{1}{\trace(L)}\frac{1}{\sqrt{Z_1Z_3}},\quad  c=\frac{1}{\trace(L)}\frac{1}{\sqrt{Z_1Z_2}},\quad f=\frac{Z_4}{\trace(L)}.
$$

Although our main focus is on constructing $\spin(7)$ metrics, it is more convenient to begin with the full Ricci-flat system, where several key estimates are transparent. From the definition of $X_j$ in \eqref{eqn: Xi and Zi}, one expects the equality
\begin{equation}
\label{eqn_trace=1}
2X_1+2X_2+2X_3+X_4=1
\end{equation}
 be preserved by the new dynamical system. Indeed, since 
\begin{equation}
\label{eqn: H=1}
\begin{split}
(2X_1+2X_2+2X_3+X_4)'&=(2X_1+2X_2+2X_3+X_4)(G-1)+2R_1+2R_2+2R_3+R_4\\
&=(2X_1+2X_2+2X_3+X_4)(G-1)+1-G \quad \text{by \eqref{eqn: new conservation}}\\
&=(2X_1+2X_2+2X_3+X_4-1)(G-1)
\end{split},
\end{equation}
the set $$\{2X_1+2X_2+2X_3+X_4=1\}$$ is invariant. We can also assume each $Z_j$ to be non-negative since the set $\{Z_j = 0\}$ is invariant by \eqref{eqn: new Einstein equation}. A straightforward observation gives the following proposition.
\begin{proposition}
\label{prop: X4 not negative}
The set $\{X_4\geq 0\}$ is invariant.
\end{proposition}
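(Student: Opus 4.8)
The plan is to show that the boundary hyperplane $\{X_4 = 0\}$ cannot be crossed from the region $\{X_4 > 0\}$, so that $\{X_4 \geq 0\}$ is forward-invariant under the flow of \eqref{eqn: new Einstein equation}. The natural tool is the sign of $X_4'$ on the set $\{X_4 = 0\}$: if $X_4 = 0$ forces $X_4' \geq 0$, then no trajectory starting with $X_4 \geq 0$ can leave the set. So the first step is simply to evaluate the fourth component of the vector field in \eqref{eqn: new Einstein equation} on $\{X_4 = 0\}$. From the system, $X_4' = X_4(G - 1) + R_4$, and the term $X_4(G-1)$ vanishes when $X_4 = 0$, leaving $X_4'\big|_{X_4 = 0} = R_4$.

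Next I would observe that $R_4$ is manifestly non-negative: by its defining formula,
\begin{equation*}
R_4 = \frac{1}{2}\frac{(k+l)^2}{\Delta^2}Z_2^2Z_3^2Z_4^2 + \frac{1}{2}\frac{l^2}{\Delta^2}Z_1^2Z_3^2Z_4^2 + \frac{1}{2}\frac{k^2}{\Delta^2}Z_1^2Z_2^2Z_4^2 \geq 0,
\end{equation*}
being a sum of squares with non-negative coefficients (and here it is harmless that $Z_1, Z_2, Z_3$ have already been taken non-negative, since only their squares appear). Hence $X_4'\big|_{X_4 = 0} \geq 0$, which is exactly the inward-pointing (or tangential) condition on the boundary of $\{X_4 \geq 0\}$.

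To turn this into a clean invariance statement I would phrase it as a standard comparison/barrier argument: along any integral curve, $X_4$ satisfies a linear ODE of the form $X_4' = X_4(G - 1) + R_4$ with $R_4 \geq 0$, so writing $X_4(\eta) = \mu(\eta)\big(X_4(\eta_0) + \int_{\eta_0}^{\eta}\mu(\tilde\eta)^{-1}R_4(\tilde\eta)\,d\tilde\eta\big)$ with integrating factor $\mu(\eta) = \exp\!\big(\int_{\eta_0}^{\eta}(G-1)\,d\tilde\eta\big) > 0$ shows that $X_4(\eta_0) \geq 0$ implies $X_4(\eta) \geq 0$ for all $\eta \geq \eta_0$ in the interval of existence. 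This makes the "straightforward observation" rigorous without invoking any nontrivial invariant-region theorem.

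I do not expect a genuine obstacle here — this is indeed a straightforward observation, as the paper states. The only point requiring a word of care is that the argument gives \emph{forward} invariance (which is what matters for constructing forward-complete metrics), not two-sided invariance: a trajectory with $X_4 < 0$ could in principle enter $\{X_4 \geq 0\}$, so "invariant" should be read in the forward sense. If one instead wants to appeal to a general principle, the one-line version is that on the face $\{X_4 = 0\}$ the vector field satisfies $X_4' = R_4 \geq 0$, i.e. it is never strictly outward-pointing, which by the Nagumo-type subtangentiality criterion gives forward invariance of the closed half-space intersected with whatever ambient invariant region (e.g. $\{Z_j \geq 0\}$, $\{2X_1+2X_2+2X_3+X_4=1\}$) is under consideration.
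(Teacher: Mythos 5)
Your proof is correct, and the core observation is the same as the paper's: on the system \eqref{eqn: new Einstein equation} one has $X_4' = X_4(G-1) + R_4$, so $X_4'\big|_{X_4=0} = R_4 \geq 0$. Where you diverge is in how you rigorize the conclusion. The paper handles the degenerate case $R_4 = 0$ (a possible non-transversal tangency) by noting that it forces the trajectory into one of the invariant faces $\{Z_j = 0\}$, on which $R_4$ and hence $X_4'$ vanish identically, so the trajectory cannot cross. You instead bypass the transversality discussion entirely with an explicit variation-of-parameters formula: along an integral curve, treating $G$ and $R_4$ as known functions of $\eta$, the linear ODE for $X_4$ gives $X_4(\eta) = \mu(\eta)\bigl(X_4(\eta_0) + \int_{\eta_0}^{\eta}\mu(\tilde\eta)^{-1}R_4\,d\tilde\eta\bigr)$ with $\mu > 0$, so nonnegativity of $X_4(\eta_0)$ and of $R_4$ yields $X_4(\eta)\ge 0$ for $\eta\ge\eta_0$ with no case analysis. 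This is arguably cleaner and more self-contained; the paper's version has the virtue of matching the style used for the later, more intricate invariant sets $\mathcal{D}^\pm$ and $\mathcal{B}^\pm$, where an integrating factor is not available and one genuinely must rule out non-transversal escape through the boundary. Your parenthetical remark — that only \emph{forward} invariance follows, which is what the paper needs — is also a correct reading of how the term is used throughout the paper.
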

\begin{proof}
We have
\begin{equation}
\left.X_4'\right|_{X_4=0}=R_4=\frac{1}{2}\frac{(k+l)^2}{\Delta^2}Z_2^2Z_3^2Z_4^2+\frac{1}{2}\frac{l^2}{\Delta^2}Z_1^2Z_3^2Z_4^2+\frac{1}{2}\frac{k^2}{\Delta^2}Z_1^2Z_2^2Z_4^2\geq 0.
\end{equation}
If a non-transversal intersection emerges on an integral curve, it is necessary that the integral curve is in the invariant set $\{Z_j=0\}$ for some $j\in\{1,2,3,4\}$. Hence, the derivative $X_4'$ vanishes identically. The possibility of non-transversal intersections is excluded.
\end{proof}

By our discussion above, a cohomogeneity one Ricci-flat metric with a generic $N_{k,l}$ as the principal orbit is represented by an integral curve to \eqref{eqn: new Einstein equation} in the following invariant subset of $\mathbb{R}^8$:
\begin{equation}
\label{eqn: new conservation set}
\begin{split}
\mathcal{C}_{RF}&:=\{ G-1+2R_1+2R_2+2R_3+R_4=0\}\cap \{2X_1+2X_2+2X_3+X_4=1\}\\
&\quad \cap \{Z_1,Z_2,Z_3,Z_4\geq 0\}\cap \{X_4\geq 0\},
\end{split}
\end{equation}
 a $6$-dimensional algebraic surface with boundaries. By \cite[Lemma 5.1]{buzano_family_2015}, we have $\lim\limits_{\eta\to \infty} t=\infty$. Therefore, if an integral curve is defined on $\mathbb{R}$, the corresponding Ricci-flat metric is forward complete.

The $\spin(7)$ equations \eqref{eqn: original spin(7) equation}–\eqref{eqn: original opposite spin(7) equation} are first-order subsystems of the second-order system \eqref{eqn: original Einstein equation}. In the new coordinates, they appear as invariant algebraic surfaces and further reduce the dimension of the phase space. Specifically, equations \eqref{eqn: original spin(7) equation}–\eqref{eqn: original opposite spin(7) equation} become
\begin{equation}
\label{eqn: spin(7)}
\begin{split}
\mathcal{S}_1^\pm&: X_1=Z_2+Z_3-Z_1\mp\frac{k+l}{2\Delta}Z_2Z_3Z_4,\\
\mathcal{S}_2^\pm&: X_2=Z_3+Z_1-Z_2\pm\frac{l}{2\Delta}Z_1Z_3Z_4,\\
\mathcal{S}_3^\pm&: X_3=Z_1+Z_2-Z_3\pm\frac{k}{2\Delta}Z_1Z_2Z_4,\\
\mathcal{S}_4^\pm&: X_4=\pm\frac{k+l}{2\Delta}Z_2Z_3Z_4\mp\frac{l}{2\Delta}Z_1Z_3Z_4\mp\frac{k}{2\Delta}Z_1Z_2Z_4.
\end{split}
\end{equation}
Substituting each $X_j$ in \eqref{eqn: new conservation} using \eqref{eqn: spin(7)}, we obtain
$$
\left(2(Z_1+Z_2+Z_3)\mp\frac{k+l}{2\Delta}Z_2Z_3Z_4\pm\frac{l}{2\Delta}Z_1Z_3Z_4\pm\frac{k}{2\Delta}Z_1Z_2Z_4\right)^2=1.
$$
On the other hand, substituting each $X_j$ in \eqref{eqn_trace=1} yields
\begin{equation}
\label{eqn: conservation Z spin(7) 1}
2(Z_1+Z_2+Z_3)\mp\frac{k+l}{2\Delta}Z_2Z_3Z_4\pm\frac{l}{2\Delta}Z_1Z_3Z_4\pm\frac{k}{2\Delta}Z_1Z_2Z_4=1,
\end{equation}
or equivalently
\begin{equation}
\label{eqn: conservation Z spin(7) 2}
2(Z_1+Z_2+Z_3)-X_4=1.
\end{equation}
By \cite[Proposition 2.2]{chi2022spin}, the sets
\begin{equation}
\label{eqn_spin7 conservation}
\begin{split}
\mathcal{C}^\pm_{\spin(7)}=&\mathcal{C}_{RF}\cap \left(\bigcap\limits_{i=1}^4\mathcal{S}_i^\pm\right)\cap \left\{2(Z_1+Z_2+Z_3)-X_4=1\right\}
\end{split}
\end{equation}
are invariant. A cohomogeneity one $\spin(7)$ metric with a generic $N_{k,l}$ as the principal orbit is represented by an integral curve to \eqref{eqn: new Einstein equation} restricted to $(\mathcal{C}^+_{\spin(7)}\cup \mathcal{C}^-_{\spin(7)}) \subset \mathcal{C}_{RF}$, a 3-dimensional algebraic surface in $\mathbb{R}^8$ with boundaries. 

It is apparent that 
\begin{equation}
\begin{split}
\mathcal{C}_{G_2}=\mathcal{C}^+_{\spin(7)}\cap \mathcal{C}^-_{\spin(7)} \cap \{X_4=0\}\cap \{Z_4=0\}
\end{split}
\end{equation}
is a 2-dimensional invariant subset. System \eqref{eqn: new Einstein equation} restricted to $\mathcal{C}_{G_2}$ is essentially the one for cohomogeneity one $G_2$ metrics with $\mathbb{CP}^2$ as the singular orbit and $SU(3)/T^2$ as the principal orbit. For a forward complete ALC $\spin(7)$ metric, components $a,b$ and $c$ in \eqref{eqn: invariant metric} grow linearly and $f$ converges to a constant as $t\to \infty$. The corresponding integral curve converges to the invariant set $\mathcal{C}_{G_2}$ as $\eta\to\infty$.

\section{Local Existence}
\label{sec: local analysis}
In this section, we study the critical points of the $\spin(7)$ system. These critical points encode the initial conditions \eqref{eqn: initial condtion} and the AC/ALC asymptotics. Linearizations at the initial condition critical points yield the local existence of $\spin(7) $ metrics near the tubular neighbourhood of $\mathbb{CP}^2$, recovering the result of \cite{reidegeld2011exceptional}.

The following is the complete list of critical points of \eqref{eqn: new Einstein equation} on $\mathcal{C}_{\spin(7)}^\pm$. We refer the reader to \cite{chi2022spin} for the complete list of critical points of \eqref{eqn: new Einstein equation} on $\mathcal{C}_{RF}$.
\begin{enumerate}[I]
\item
$P_0^{k+l}:=\left(\frac{1}{3},0,0,\frac{1}{3},0,\frac{1}{3},\frac{1}{3},\frac{6\Delta}{k+l}\right)\in \mathcal{C}_{\spin}^+$;\\
$P_0^{l}:=\left(0,\frac{1}{3},0,\frac{1}{3},\frac{1}{3},0,\frac{1}{3},\frac{6\Delta}{l}\right),\quad P_0^{k}:=\left(0,0,\frac{1}{3},\frac{1}{3},\frac{1}{3},\frac{1}{3},0,\frac{6\Delta}{k}\right)\in \mathcal{C}_{\spin}^-$.

These critical points represent the initial conditions \eqref{eqn: initial condtion} in the new coordinate. Integral curves that emanate from these points represent Ricci-flat metrics that are defined on the tubular neighborhood around $\mathbb{CP}^2$ in $M_{k,l}^{k+l}$, $M_{k,l}^{l}$ and  $M_{k,l}^{k}$, respectively.
\item
\begin{enumerate}
\item
$P_{ALC}:=\left(\frac{1}{6},\frac{1}{6},\frac{1}{6},0,\frac{1}{6},\frac{1}{6},\frac{1}{6},0\right)\in\mathcal{C}_{G_2}$
\end{enumerate}

If an integral curve converges to $P_{ALC}$, the corresponding metric has ALC asymptotics, with its end modeled on an 
$\mathbb{S}^1$-bundle over the cone on the homogeneous nearly K\"ahler metric $SU(3)/T^2$.

\item
$Q_1^{k+l}:=\left(\frac{1}{2},-\frac{1}{2},-\frac{1}{2},0,\frac{\sqrt{5}}{2},0,0,0\right),\quad  Q_1^l:=\left(-\frac{1}{2},\frac{1}{2},-\frac{1}{2},0,0,\frac{\sqrt{5}}{2},0,0\right),\quad  Q_1^k:=\left(-\frac{1}{2},-\frac{1}{2},\frac{1}{2},0,0,0,\frac{\sqrt{5}}{2},0\right)\in \mathcal{C}_{G_2}.$

These critical points are sources in the subsystem on $\mathcal{C}_{G_2}$. Singular $G_2$ metrics in \cite{cleyton_cohomogeneity-one_2002} are represented by integral curves that emanate from these points.
\item
$Q_0^{k+l}:=\left(\frac{1}{2},0,0,0,0,\frac{1}{4},\frac{1}{4},0\right),\quad  Q_0^l:=\left(0,\frac{1}{2},0,0,\frac{1}{4},0,\frac{1}{4},0\right),\quad Q_0^k:=\left(0,0,\frac{1}{2},0,\frac{1}{4},\frac{1}{4},0,0\right)\in \mathcal{C}_{G_2}$.

These critical points are saddles in the subsystem on $\mathcal{C}_{G_2}$. The smooth $G_2$ metrics that collapse to $\mathbb{CP}^2$ in \cite{bryant_construction_1989, gibbons_einstein_1990} are represented by integral curves that emanate from these points.

\item
$\left(\frac{1}{7},\frac{1}{7},\frac{1}{7},\frac{1}{7},z_1,z_2,z_3,z_4\right)$, where $R_i(z_1,z_2,z_3,z_4)=\frac{6}{49}$ for each $i$.

There are exactly two critical points of this type, corresponding to the homogeneous Einstein metrics on  $N_{k,l}$ \cite{wang_some_1982, castellani1984n, page1984new, kowalski_homogeneous_1993}. Viewed as trivial integral curves of \eqref{eqn: new Einstein equation}, they represent Ricci-flat metric cones over these homogeneous Einstein metrics, which are in fact $\spin(7)$ \cite{bar1993real}. Below we recover the existence of these critical points and show that the associated $\spin(7)$ cones have opposite chirality.
\end{enumerate}

\begin{proposition}
\label{prop: spin7_cone_chirality}
Each of $\mathcal{C}_{\spin(7)}^\pm$ admits exactly one critical point of Type~V, respectively denoted as $P_{AC}^\pm$. 
\end{proposition}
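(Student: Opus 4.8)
The plan is to turn the statement into a count of positive solutions of an explicit system, exploiting that on $\mathcal{C}_{\spin(7)}^\pm$ each $X_j$ is a function of $(Z_1,Z_2,Z_3,Z_4)$ through the defining relations $\mathcal{S}_j^\pm$. First I would observe that a critical point of \eqref{eqn: new Einstein equation} lying in $\mathcal{C}_{\spin(7)}^\pm$ with all $Z_j>0$ is automatically of Type~V: the equations $Z_1'=Z_2'=Z_3'=0$ force $X_1=X_2=X_3=G$, then $Z_4'=0$ gives $X_4=G$, and $X_1=G=7X_1^2$ forces $X_1=\tfrac17$ (the value $0$ is excluded since it would make $R_4=0$ with all $Z_j>0$), whence $R_j=X_j(1-G)=\tfrac{6}{49}$ for all $j$. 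Conversely a Type~V point has all $Z_j>0$: $R_4=\tfrac{6}{49}>0$ forbids $Z_4=0$, and if some $Z_i$ with $i\le 3$ vanished then two of $R_1,R_2,R_3$ would be negatives of one another and could not both equal $\tfrac{6}{49}$. So the Type~V points of $\mathcal{C}_{\spin(7)}^\pm$ are exactly the positive solutions $(Z_1,Z_2,Z_3,Z_4)$ of $\mathcal{S}_1^\pm,\dots,\mathcal{S}_4^\pm$ subject to $X_1=\dots=X_4=\tfrac17$.

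Next I would reduce this system. Summing $\mathcal{S}_1^\pm+\mathcal{S}_2^\pm+\mathcal{S}_3^\pm$ and using $\mathcal{S}_4^\pm$ yields $Z_1+Z_2+Z_3=\tfrac47$, after which $\mathcal{S}_4^\pm$ is redundant; substituting this back, $\mathcal{S}_1^\pm,\mathcal{S}_2^\pm,\mathcal{S}_3^\pm$ become
\begin{equation*}
\frac{k+l}{2\Delta}Z_2Z_3Z_4=\pm\Bigl(\frac37-2Z_1\Bigr),\qquad \frac{l}{2\Delta}Z_1Z_3Z_4=\pm\Bigl(2Z_2-\frac37\Bigr),\qquad \frac{k}{2\Delta}Z_1Z_2Z_4=\pm\Bigl(2Z_3-\frac37\Bigr).
\end{equation*}
Pairwise ratios eliminate $Z_4$ and give, with $h(x):=x\bigl(2x-\tfrac37\bigr)$, the \emph{chirality-independent} relations
\begin{equation*}
-\,l\,h(Z_1)=(k+l)\,h(Z_2),\qquad -\,k\,h(Z_1)=(k+l)\,h(Z_3),\qquad Z_1+Z_2+Z_3=\tfrac47 ,
\end{equation*}
the sign being recovered from $Z_4>0$, which forces $Z_1<\tfrac{3}{14}$ on $\mathcal{C}_{\spin(7)}^+$ and $Z_1>\tfrac{3}{14}$ on $\mathcal{C}_{\spin(7)}^-$ (the value $Z_1=\tfrac{3}{14}$ would force $Z_2=Z_3=\tfrac{3}{14}$, violating $Z_1+Z_2+Z_3=\tfrac47$). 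Hence a Type~V point sits on $\mathcal{C}_{\spin(7)}^+$ (resp.\ $\mathcal{C}_{\spin(7)}^-$) exactly when $Z_1<\tfrac{3}{14}$ (resp.\ $Z_1>\tfrac{3}{14}$), so $P_{AC}^+\ne P_{AC}^-$ automatically.

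It remains to show the reduced system has exactly one solution on each side of $Z_1=\tfrac{3}{14}$. For the total count I would invoke that the only Type~V critical points of $\mathcal{C}_{RF}$ are the Ricci-flat metric cones over the two $SU(3)$-invariant Einstein metrics on $N_{k,l}$ \cite{wang_some_1982, castellani1984n, page1984new, kowalski_homogeneous_1993}, which are $\spin(7)$ by \cite{bar1993real} and so lie in $\mathcal{C}_{\spin(7)}^+\cup\mathcal{C}_{\spin(7)}^-$ --- so there are exactly two Type~V points in total. To fix the split I would treat the region $Z_1<\tfrac{3}{14}$ directly: there $h(Z_1)<0$, hence $h(Z_2),h(Z_3)>0$ and $Z_2,Z_3>\tfrac{3}{14}$, and setting $s:=-h(Z_1)\in(0,\tfrac{9}{392}]$ one solves for $Z_1$ (the smaller root in $(0,\tfrac{3}{14})$) and for $Z_2,Z_3$ (the unique roots $>\tfrac{3}{14}$) as explicit functions of $s$; the constraint $Z_1+Z_2+Z_3=\tfrac47$ becomes $\varphi(s)=1$, where
\begin{equation*}
\varphi(s)=-\sqrt{\tfrac{9}{49}-8s}+\sqrt{\tfrac{9}{49}+\tfrac{8ls}{k+l}}+\sqrt{\tfrac{9}{49}+\tfrac{8ks}{k+l}}
\end{equation*}
is strictly increasing with $\varphi(0^+)=\tfrac37<1$ and $\varphi\bigl(\tfrac{9}{392}\bigr)=\tfrac37\bigl(\sqrt{\tfrac{k+2l}{k+l}}+\sqrt{\tfrac{2k+l}{k+l}}\bigr)\ge\tfrac{3(1+\sqrt2)}{7}>1$, so there is exactly one admissible $s$ (the other root for $Z_1$ gives a strictly decreasing companion function with the same minimum value $>1$, hence no solution). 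Thus $\mathcal{C}_{\spin(7)}^+$ carries exactly one Type~V point, and by the count of two so does $\mathcal{C}_{\spin(7)}^-$, proving the proposition with $P_{AC}^\pm$ the respective points. I expect the uniform estimates in this last step --- $\varphi(\tfrac{9}{392})>1$ for all coprime $(k,l)$ and the disposal of the extraneous roots --- to be the main technical point; a fully self-contained alternative would replace the appeal to the Einstein classification by the analogous, but messier, monotonicity analysis on $Z_1>\tfrac{3}{14}$, where $Z_2$ and $Z_3$ each bring an extra sign choice.
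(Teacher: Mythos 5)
Your proof is correct, and it takes a route that, while starting from the same algebraic system, differs substantially from the paper's in the reduction and counting steps. The paper passes to the ratios $\alpha=Z_2/Z_1$, $\beta=Z_3/Z_1$ and shows the Type~V points are the two intersection points of the ellipse $\{L_1=0\}$ with the hyperbola $\{L_2=0\}$, locating one intersection on each of two disjoint elliptical arcs by evaluating $L_2$ at four vertices; the chirality at each point is then read off from the sign of $(k+l)Z_2Z_3-lZ_1Z_3-kZ_1Z_2$. You instead eliminate $Z_4$ by taking pairwise ratios of the (reduced) $\mathcal{S}_j^\pm$, obtaining the chirality-independent relations $-l\,h(Z_1)=(k+l)h(Z_2)$, $-k\,h(Z_1)=(k+l)h(Z_3)$, collapsing the chirality criterion to the cleaner condition $Z_1\lessgtr\tfrac{3}{14}$, and replacing the two-dimensional intersection count with a one-dimensional monotonicity argument for the explicit function $\varphi(s)$. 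Both proofs lean on the classification of homogeneous Einstein metrics on $N_{k,l}$ and the fact that the corresponding cones are $\spin(7)$ to get the total count of two, and both then pin down one Type~V point on each side. Your approach has the merit of a transparent uniqueness proof in the region $Z_1<\tfrac{3}{14}$ (the paper's arc-by-arc argument only gives existence, with uniqueness inferred from the external count of two); on the other hand, your disposal of the second branch of the quadratic for $Z_1$ deserves one more sentence — the companion function $\tilde\varphi(s)=+\sqrt{\tfrac{9}{49}-8s}+\cdots$ has $\tilde\varphi'(0)=0$, so ``strictly decreasing'' follows from concavity rather than being immediate, but the conclusion $\tilde\varphi\ge\tilde\varphi(\tfrac{9}{392})=\varphi(\tfrac{9}{392})>1$ is sound. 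Your opening paragraph (characterizing Type~V points as exactly the critical points of the $\spin(7)$ system with all $Z_j>0$) is a useful clarification that the paper leaves implicit.
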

\begin{proof}
By the aforementioned works, there are exactly two critical points of Type V in $\mathcal{C}_{\spin(7)}^+\cup \mathcal{C}_{\spin(7)}^-$. We show that each of $\mathcal{C}_{\spin(7)}^\pm$ contains one.

By \eqref{eqn: conservation Z spin(7) 2}, Type V critical points on $\mathcal{C}_{\spin(7)}^\pm$ are on the hypersurface $\{Z_1+Z_2+Z_3=\frac{4}{7}\}$. Substituting $X_j=\frac{1}{7}=\frac{1}{4}(Z_1+Z_2+Z_3)$ in $\mathcal{S}_j^\pm$ for $j\in \{1,2,3\}$, we have 
\begin{equation}
\begin{split}
&\frac{1}{4Z_2Z_3}(3Z_2+3Z_3-5Z_1)\mp \frac{k+l}{2\Delta}Z_4=0,\\
&\frac{1}{4Z_3Z_1}(3Z_3+3Z_1-5Z_2)\pm \frac{l}{2\Delta}Z_4=0,\\
&\frac{1}{4Z_1Z_2}(3Z_1+3Z_2-5Z_3)\pm\frac{k}{2\Delta}Z_4=0.
\end{split}
\end{equation}
Summing the above three equations, we have 
$$5(Z_1^2+Z_2^2+Z_3^2)=6(Z_2Z_3+Z_3Z_1+Z_1Z_2),$$ while the last two equations above yield
$$
kZ_2(3Z_3+3Z_1-5Z_2)=lZ_3(3Z_1+3Z_2-5Z_3).
$$
Therefore, Type V critical points are realized as intersections between the the circle 
$$\left\{Z_1+Z_2+Z_3=\frac{4}{7}\right\}\cap \{5(Z_1^2+Z_2^2+Z_3^2)=6(Z_2Z_3+Z_3Z_1+Z_1Z_2)\}.$$
and the hyperbola
$$\left\{Z_1+Z_2+Z_3=\frac{4}{7}\right\}\cap \{kZ_2(3Z_3+3Z_1-5Z_2)=lZ_3(3Z_1+3Z_2-5Z_3)\}.$$
Consider $\alpha=\frac{Z_2}{Z_1}$, $\beta=\frac{Z_3}{Z_1}$, it suffices to solve the equations $L_1=L_2=0$, where
\begin{equation}
\begin{split}
L_1&=5(1+\alpha^2+\beta^2)-(6\alpha\beta+6\beta+6\alpha)=(\alpha+\beta-3)^2+4(\alpha-\beta)^2-4\\
L_2&=k\alpha(3+3\beta-5\alpha)-l\beta(3+3\alpha-5\beta).
\end{split}
\end{equation}
The level curve $L_1=0$ is an ellipse in the $(\alpha,\beta)$-space that passes through
$$
\left(\frac{2}{5},1\right),\quad \left(1,\frac{2}{5}\right),\quad \left(2,\frac{13}{5}\right),\quad \left(\frac{13}{5},2\right),
$$
while the level curve $L_2=0$ is a hyperbola. Since 
$$
L_2\left(\frac{2}{5},1\right)=k\frac{8}{5}+l\frac{4}{5},\quad L_2\left(1,\frac{2}{5}\right)=-k\frac{4}{5}-l\frac{8}{5},$$
$$L_2\left(2,\frac{13}{5}\right)=k\frac{8}{5}+l\frac{52}{5},\quad L_2\left(\frac{13}{5},2\right)=-k\frac{52}{5}-l\frac{8}{5},
$$
there exists an intersection point on the elliptical arc
$$\{L_1=0\}\cap \left\{\alpha+\beta<\frac{7}{5}\right\}=\{L_1=0\}\cap \left\{\alpha+\beta<\frac{7}{5}\right\}\cap \left\{-\frac{3}{5}<\alpha-\beta<\frac{3}{5}\right\},$$
another on 
$$\{L_1=0\}\cap \left\{\alpha+\beta>\frac{23}{5}\right\}=\{L_1=0\}\cap \left\{\alpha+\beta>\frac{23}{5}\right\}\cap \left\{-\frac{3}{5}<\alpha-\beta<\frac{3}{5}\right\}.$$

For a Type V critical point, we have 
$$\frac{1}{7}=X_4=\pm\left(\frac{k+l}{2\Delta}Z_2Z_3-\frac{l}{2\Delta}Z_1Z_3-\frac{k}{2\Delta}Z_1Z_2\right)Z_4.$$ For the first intersection point, we have $\alpha, \beta < 1$,  equivalently $Z_2,Z_3 < Z_1$. This makes the expression in parentheses negative and thus the point lies in $\mathcal{C}_{\spin(7)}^-$ in order for $Z_4>0$. For the second intersection point, we have $\alpha,\beta>2$, equivalently $Z_2,Z_3>2Z_1$. The expression in parentheses is positive, so the intersection point must lie in $\mathcal{C}_{\spin(7)}^+$ for $Z_4>0$.
\end{proof}

Each Type I critical point $P_0^i$ is hyperbolic with a single positive eigenvalue $\frac{2}{3}$, and it has two unstable eigenvectors $v_1$ and $v_2$ that are tangent to $\mathcal{C}_{\spin(7)}^\pm$. Below we list $v_1$ and $v_2$ for each $P_0^i$.
$$
\begin{array}{c|c|c|c}
&P_0^{k+l}&P_0^{l}&P_0^{k}\\
\hline
&&&\\
v_1& \begin{bmatrix}
2\\
0\\
0\\
-4\\
0\\
-1\\
-1\\
-36\frac{\Delta}{k+l}
\end{bmatrix}&\begin{bmatrix}
0\\
2\\
0\\
-4\\
-1\\
0\\
-1\\
-36\frac{\Delta}{l}
\end{bmatrix}&\begin{bmatrix}
0\\
0\\
2\\
-4\\
-1\\
-1\\
0\\
-36\frac{\Delta}{k}
\end{bmatrix}\\
&&&\\
v_2& \begin{bmatrix}
-3(k+l)\\
4k+5l\\
5k+4l\\
-12(k+l)\\
3(k+l)\\
-5k-4l\\
-4k-5l\\
0
\end{bmatrix}&\begin{bmatrix}
5l+k\\
-3l\\
4l-k\\
-12l\\
-4l+k\\
3l\\
-5l-k\\
0
\end{bmatrix}&\begin{bmatrix}
5k+l\\
4k-l\\
-3k\\
-12k\\
-4k+l\\
-5k-l\\
3k\\
0
\end{bmatrix}\\[1ex]
\end{array}.
$$

By the Hartman--Grobman Theorem, there is a one-to-one correspondence between linearized solutions and the corresponding integral curves emanating from $P_0^i$. By the unstable version of \cite[Theorem 4.5]{coddington_theory_1955}, it is unambiguous to denote the integral curve emanating from $P_0^{i}$ by $\gamma^{i}_{\theta}$, where
\begin{equation}
\label{eqn_linearized solution}
\gamma^{i}_{\theta}=P_0^{i}+s_1 e^{\frac{2\eta}{3}}v_1+s_2 e^{\frac{2\eta}{3}}v_2+O(e^{\left(\frac{2}{3}+\epsilon\right)\eta}),\quad  (s_1,s_2)=(\cos(\theta),\sin(\theta)).
\end{equation}
The normalization $s_1^2+s_2^2=1$ removes the scaling redundancy, and we set $\theta\in [0,\pi]$ so that each $Z_j$ is non-negative. We hence obtain 
$$\{\gamma^{i}_{\theta}\mid \theta\in[0,\pi]\},$$
a continuous one-parameter family of $\spin(7)$ metrics defined on a tubular neighbourhood of $\mathbb{CP}^2$ in $M_{k,l}^i$. 
All integral curves in the interior represent non-degenerate metrics, since each $Z_j>0$.
The two integral curves at the boundaries $\theta=0,\pi$ represent degenerate metrics, as some $Z_j$ vanishes identically. Specifically, for each $i$, integral curves $\gamma^i_0$ and $\gamma^i_{\pi}$ lie on the following flow-invariant algebraic curves $\mathcal{W}_i$. 
\begin{equation}
\label{eqn_W curve}
\begin{split}
\mathcal{W}_{k+l}&:=\mathcal{C}_{\spin(7)}^+\cap \{X_2=X_3=0\}\cap \{X_1=1-2Z_2\}\cap \{X_4=4Z_2-1\}\\
&\quad \cap \{Z_2=Z_3\}\cap \{Z_1=0\}\cap \left\{Z_4=\frac{2\Delta}{k+l} \frac{4Z_2-1}{Z_2^2}\right\},\\
\mathcal{W}_l&:=\mathcal{C}_{\spin(7)}^-\cap \{X_1=X_3=0\}\cap \{X_2=1-2Z_3\}\cap \{X_4=4Z_3-1\}\\
&\quad \cap \{Z_3=Z_1\}\cap \{Z_2=0\}\cap \left\{Z_4=\frac{2\Delta}{l} \frac{4Z_3-1}{Z_3^2}\right\},\\
\mathcal{W}_k&:=\mathcal{C}_{\spin(7)}^-\cap \{X_1=X_2=0\}\cap \{X_3=1-2Z_1\}\cap \{X_4=4Z_1-1\}\\
&\quad \cap \{Z_1=Z_2\}\cap \{Z_3=0\}\cap \left\{Z_4=\frac{2\Delta}{k} \frac{4Z_1-1}{Z_1^2}\right\}.
\end{split}
\end{equation}
In particular, we have 
\begin{equation}
\label{eqn_gamma_0^i}
\begin{split}
\gamma_0^{k+l}&=\mathcal{W}_{k+l}\cap \left\{\frac{1}{4}<Z_2=Z_3<\frac{1}{3}\right\}\\
\gamma_0^{l}&=\mathcal{W}_{l}\cap \left\{\frac{1}{4}<Z_3=Z_1<\frac{1}{3}\right\}\\
\gamma_0^{k}&=\mathcal{W}_{k}\cap \left\{\frac{1}{4}<Z_1=Z_2<\frac{1}{3}\right\}\\
\end{split}
\end{equation}
Furthermore, the inequality $Z_4<\frac{6\Delta}{i}$ holds along each $\gamma_0^i$ and the integral curve joins $P_0^i$ and $Q_0^i$. On the other hand, we have $Z_4>\frac{6\Delta}{i}$ along each $\gamma_\pi^i$.

Geometrically, the parameter $\theta$ governs the initial differences among principal curvatures. For example, for $\gamma_\theta^{k+l}$, we have 
\begin{equation}
\begin{split}
\cot(\theta)&=\frac{s_1}{s_2}=\lim\limits_{\eta\to-\infty}\frac{k+l}{2}\frac{X_1-X_2-X_3-X_4}{Z_1}=\lim\limits_{t\to 0}\frac{k+l}{2}\frac{bc}{a}\left(\frac{\dot{a}}{a}-\frac{\dot{b}}{b}-\frac{\dot{c}}{c}-\frac{\dot{f}}{f}\right)\\
&=-\frac{3(k+l)}{16\Delta}\left(12\Delta+q\right),
\end{split}
\end{equation}
where $q$ is the free third-order parameter appearing in the power series \cite[(7.1)]{reidegeld2011exceptional}.

\section{Invariant sets for ALC metrics}
\label{sec: invariant set 1}
The compact invariant sets in \cite{chi2022spin} are constructed using the quantity $Z_4=f\mathrm{tr}(L)$. If $k>l$,
the condition
$$
f\mathrm{tr}(L)\leq \lim_{t\to 0} \left(f\mathrm{tr}(L)\right)(\gamma^i_\theta)
$$
defines a compact invariant set in the $(X_j,Z_j)$-space for $i\in \{k+l,k\}$, which helps prove the forward completeness for metrics on $M_{k,l}^{k+l}$ and $M_{k,l}^{k}$. The set fails to be compact if $i=l$. This is the essential limitation of the old construction.

In this section, we introduce a new inequality,
\begin{equation}
\label{eqn_new inequality}
(k+l)\frac{f}{a}+l\frac{f}{b}+k\frac{f}{c}\leq 2\Delta.
\end{equation}
This condition is more geometrically natural, as it compares the $\mathbb{S}^1$-fiber in $N_{k,l}$ with the other metric components. The inequality admits dihedral symmetry among pairs $(a,k+l)$, $(b,l)$, and $(c,k)$. The associated invariant sets bound $Z_1$, $Z_2$, $Z_3$ simultaneously. This helps prove that a $\gamma^i_\theta$ with a sufficiently small $\theta\geq 0$ remains in a compact subset for 
\emph{all three cases} $i\in \{k+l,l,k\}$. In Section~\ref{subsec_+} we show that \eqref{eqn_new inequality} defines an invariant set inside $\mathcal{C}_{\spin(7)}^{+}$ for $\gamma^{k+l}_\theta$.  
In Section~\ref{subsec_-} we prove the analogous statement in $\mathcal{C}_{\spin(7)}^{-}$ for $\gamma^{l}_\theta$ and $\gamma^{k}_\theta$.

\subsection{$\mathcal{C}^+_{\spin(7)}$}
\label{subsec_+}
Define 
\begin{equation}
\label{eqn_D+}
\begin{split}
\mathcal{D}^+&:=\mathcal{C}^+_{\spin(7)}\cap \{Z_1\leq  Z_2\}\cap \{Z_1\leq  Z_3\}\\
&\quad \cap \left\{\left(k\sqrt{Z_1Z_2}+l\sqrt{Z_1Z_3}+(k+l)\sqrt{Z_2Z_3}\right)Z_4\leq 2\Delta\right\}.
\end{split}
\end{equation}
\begin{lemma}
\label{lem: invariant+}
The set $\mathcal{D}^+$ is invariant.
\end{lemma}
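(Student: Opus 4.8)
To prove Lemma~\ref{lem: invariant+}, the plan is to check invariance of $\mathcal{D}^+$ by the standard boundary argument: for each of the three inequalities cutting out $\mathcal{D}^+$, one verifies that on the face where that inequality degenerates to an equality (the other two continuing to hold) the $\eta$-derivative of the corresponding defining function points inward along the flow of \eqref{eqn: new Einstein equation} restricted to $\mathcal{C}^+_{\spin(7)}$; non-transversal contacts are then handled exactly as in the proof of Proposition~\ref{prop: X4 not negative}.

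The two ordering faces are immediate. Along $\{Z_1=Z_2\}$ one reads off from \eqref{eqn: new Einstein equation} that $(Z_1-Z_2)'=2Z_1(X_1-X_2)$, and substituting $\mathcal{S}_1^+$ and $\mathcal{S}_2^+$ from \eqref{eqn: spin(7)} gives, on that face,
\[
X_1-X_2=-\frac{k+2l}{2\Delta}Z_1Z_3Z_4\le 0,
\]
so $(Z_1-Z_2)'\le 0$ there. Symmetrically, along $\{Z_1=Z_3\}$ one has $(Z_1-Z_3)'=2Z_1(X_1-X_3)$ with $X_1-X_3=-\tfrac{2k+l}{2\Delta}Z_1Z_2Z_4\le 0$. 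Hence neither ordering condition can be broken.

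For the third condition, write $g:=\bigl(k\sqrt{Z_1Z_2}+l\sqrt{Z_1Z_3}+(k+l)\sqrt{Z_2Z_3}\bigr)Z_4$, which in the original variables equals $(k+l)\tfrac fa+l\tfrac fb+k\tfrac fc$; the condition reads $g\le 2\Delta$. A direct computation with \eqref{eqn: new Einstein equation} yields the clean identity $\bigl(\sqrt{Z_2Z_3}\,Z_4\bigr)'=\sqrt{Z_2Z_3}\,Z_4\,(X_4-X_1)$ together with its two cyclic counterparts, whence
\[
g'=(k+l)\sqrt{Z_2Z_3}\,Z_4\,(X_4-X_1)+l\sqrt{Z_1Z_3}\,Z_4\,(X_4-X_2)+k\sqrt{Z_1Z_2}\,Z_4\,(X_4-X_3).
\]
Substituting the $\mathcal{S}_j^+$ expressions for the $X_j$ and simplifying with the conservation law \eqref{eqn: conservation Z spin(7) 2}, $g'$ becomes an explicit expression in the $Z_j$; recorded in the original variables it is a positive factor times a rational expression in $\tfrac fa,\tfrac fb,\tfrac fc$. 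On the face $\{g=2\Delta\}$ we must show $g'\le 0$, and this is the main obstacle. The two ordering conditions enter here as $a\le b$ and $a\le c$, equivalently $\tfrac fa\ge\tfrac fb$ and $\tfrac fa\ge\tfrac fc$, while $g=2\Delta$ gives $\tfrac{\Delta}{k+l}\le\tfrac fa\le\tfrac{2\Delta}{k+l}$; after clearing denominators and eliminating $2\Delta$ through $2\Delta=(k+l)\tfrac fa+l\tfrac fb+k\tfrac fc$, the task reduces to showing that a homogeneous polynomial in $\tfrac fa,\tfrac fb,\tfrac fc$, with coefficients depending on $k$ and $l$, is nonpositive on the cone $\{\tfrac fa\ge\tfrac fb,\ \tfrac fa\ge\tfrac fc\}$. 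I expect to establish this by writing that polynomial as a sum of terms each manifestly signed by the two orderings --- several pieces, such as the $-\tfrac{kl}{2\Delta}\tfrac{f^2}{bc}\bigl(\tfrac1b+\tfrac1c\bigr)$-type contribution, are already visibly negative --- possibly combined with the bound on $\tfrac fa$; the dihedral symmetry of $g$ among the pairs $(a,k+l)$, $(b,l)$, $(c,k)$ should help organize this decomposition, and producing it is the one genuinely delicate step.

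Finally, a non-transversal contact with an order face forces $Z_1Z_3Z_4=0$, hence (using $Z_1\le Z_3$) $Z_1=0$ or $Z_4=0$; and a non-transversal contact with the fiber face is, by the equality analysis of the polynomial inequality above, confined to $\bigcup_j\{Z_j=0\}$ together with the critical point $P^+_{AC}$, which a non-constant integral curve never attains. In each remaining case the curve lies either in an invariant coordinate hyperplane $\{Z_j=0\}$, or in the invariant set $\mathcal{C}_{G_2}$ (when $Z_4=0$), on which the pertinent defining function has identically vanishing derivative; hence, just as in the proof of Proposition~\ref{prop: X4 not negative}, the curve cannot escape $\mathcal{D}^+$, which is therefore invariant.
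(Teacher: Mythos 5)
Your overall strategy coincides with the paper's: check the flow is inward on each of the three boundary faces, and handle non‑transversal contacts separately. The two ordering faces are correctly and efficiently dispatched (your $X_1-X_2=-\tfrac{k+2l}{2\Delta}Z_1Z_3Z_4$ on $\{Z_1=Z_2\}$ agrees with the paper's $X_2-X_1=\tfrac{l}{2\Delta}Z_1Z_3Z_4+\tfrac{k+l}{2\Delta}Z_2Z_3Z_4$ once $Z_1=Z_2$ is imposed), and the identity $(\sqrt{Z_2Z_3}\,Z_4)'=\sqrt{Z_2Z_3}\,Z_4(X_4-X_1)$ and its cyclic counterparts is exactly how the paper sets up the third face.

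However, there is a genuine gap at precisely the point you flag as ``the main obstacle.'' The proof of this lemma \emph{is} the polynomial inequality on the face $\{g=2\Delta\}$; you describe it as something you ``expect to establish'' via a sum-of-signed-terms decomposition but never produce it. The paper does the real work here: after passing to the homogenized variables $\zeta=Z_1Z_4$, $\alpha=\sqrt{Z_2/Z_1}$, $\beta=\sqrt{Z_3/Z_1}$ and eliminating $\zeta$ on the face, $g'$ factors as a negative multiple of $\beta^2\Xi_0\,l^2+\alpha\beta\Xi_1\,kl+\alpha^2\Xi_2\,k^2$, with $\Xi_0,\Xi_1,\Xi_2$ explicit quadratics in $(\alpha,\beta)$ shown nonnegative on $[1,\infty)^2$ by discriminant and symmetry arguments. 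Without producing a decomposition of comparable force, the lemma is not proved; the heuristics you offer (dihedral symmetry, a few visibly negative pieces) are a plausible start but not a proof.

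There is also a smaller error in the non‑transversal analysis. You assert the equality set of the fiber‑face derivative contains $P_{AC}^+$; the paper's Proposition~\ref{prop_AC not in D} shows $P_{AC}^+$ satisfies the strict inequality $g>2\Delta$, so it does not lie on the face $\{g=2\Delta\}$ at all. The actual degenerate point is $P_0^{k+l}$, as the paper determines by sending $Z_1\to 0$ in the fiber‑face derivative, getting $2\Delta(2\sqrt{Z_2Z_3}-Z_2-Z_3)\le 0$ with equality forcing $Z_2=Z_3$, which together with the $\spin(7)$ constraints pins down $P_0^{k+l}$. Your fallback—``the curve lies in an invariant hyperplane on which the defining function has identically vanishing derivative''—is also not literally true for the fiber‑face function $g$ on $\{Z_1=0\}$, where $g'$ is nonzero in general; the correct reading is that $g'\le 0$ persists there, which still confines the curve, but that is a different argument than the one you invoke from Proposition~\ref{prop: X4 not negative}.
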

\begin{proof}
By \eqref{eqn: new Einstein equation} and \eqref{eqn: spin(7)}, we have 
\begin{equation}
\label{eqn: Z1<Z2}
\begin{split}
\left.(Z_2-Z_1)'\right|_{Z_2-Z_1=0}&=2Z_1(X_2-X_1)\\
&=2Z_1\left(\frac{l}{2\Delta}Z_1Z_3Z_4+\frac{k+l}{2\Delta}Z_2Z_3Z_4\right)\\
&\geq 0
\end{split},
\end{equation}
\begin{equation}
\label{eqn: Z1<Z3}
\begin{split}
\left.(Z_3-Z_1)'\right|_{Z_3-Z_1=0}&=2Z_1(X_3-X_1)\\
&=2Z_1\left(\frac{k}{2\Delta}Z_1Z_2Z_4+\frac{k+l}{2\Delta}Z_2Z_3Z_4\right)\\
&\geq 0
\end{split}.
\end{equation}
It suffices to show that an integral curve in $\mathcal{D}^+$ does not escape through the boundary $\mathcal{D}^+\cap \{\left(k\sqrt{Z_1Z_2}+l\sqrt{Z_1Z_3}+(k+l)\sqrt{Z_2Z_3}\right)Z_4= 2\Delta\}$.

We have 
\begin{equation}
\label{eqn: technical computation+ 1-1}
\begin{split}
&\left.\left(\left(k\sqrt{Z_1Z_2}+l\sqrt{Z_1Z_3}+(k+l)\sqrt{Z_2Z_3}\right)Z_4\right)'\right|_{\left(k\sqrt{Z_1Z_2}+l\sqrt{Z_1Z_3}+(k+l)\sqrt{Z_2Z_3}\right)Z_4=2\Delta}
\\
&=k\sqrt{Z_1Z_2}Z_4(X_4-X_3)+l\sqrt{Z_1Z_3}Z_4(X_4-X_2)+(k+l)\sqrt{Z_2Z_3}Z_4(X_4-X_1)\\
&=Z_4\left(k\sqrt{Z_1Z_2}+l\sqrt{Z_1Z_3}+(k+l)\sqrt{Z_2Z_3}\right)X_4\\
&\quad +Z_4(k\sqrt{Z_1Z_2}(-X_3)+l\sqrt{Z_1Z_3}(-X_2)+(k+l)\sqrt{Z_2Z_3}(-X_1))\\
&=Z_4\left(k\sqrt{Z_1Z_2}+l\sqrt{Z_1Z_3}+(k+l)\sqrt{Z_2Z_3}\right)\left(\frac{k+l}{2\Delta}Z_2Z_3Z_4-\frac{l}{2\Delta}Z_1Z_3Z_4-\frac{k}{2\Delta}Z_1Z_2Z_4\right) \\
&\quad +k\sqrt{Z_1Z_2}Z_4\left(-\frac{k}{2\Delta}Z_1Z_2Z_4+Z_3-Z_1-Z_2\right)\\
&\quad +l\sqrt{Z_1Z_3}Z_4\left(-\frac{l}{2\Delta}Z_1Z_3Z_4+Z_2-Z_3-Z_1\right)\\
&\quad +(k+l)\sqrt{Z_2Z_3}Z_4\left(\frac{k+l}{2\Delta}Z_2Z_3Z_4+Z_1-Z_2-Z_3\right).
\end{split}
\end{equation}
Let
\begin{equation}
\label{eqn: alpha beta gamma in techinal}
Z_1Z_4=\zeta,\quad \sqrt{\frac{Z_2}{Z_1}}=\alpha, \quad  \sqrt{\frac{Z_3}{Z_1}}=\beta.
\end{equation}

The above computation becomes
\begin{equation}
\label{eqn: technical computation+ 1-2}
\begin{split}
&\left.\left(\left(k\sqrt{Z_1Z_2}+l\sqrt{Z_1Z_3}+(k+l)\sqrt{Z_2Z_3}\right)Z_4\right)'\right|_{\left(k\sqrt{Z_1Z_2}+l\sqrt{Z_1Z_3}+(k+l)\sqrt{Z_2Z_3}\right)Z_4=2\Delta}
\\
&=Z_1\zeta^2(k\alpha+l\beta+(k+l)\alpha\beta)\left(\frac{k+l}{2\Delta}\alpha^2\beta^2-\frac{l}{2\Delta}\beta^2-\frac{k}{2\Delta}\alpha^2\right)\\
&\quad +Z_1\zeta^2\left(\frac{(k+l)^2}{2\Delta}\alpha^3\beta^3-\frac{l^2}{2\Delta}\beta^3-\frac{k^2}{2\Delta}\alpha^3\right)\\
&\quad +Z_1\zeta(k\alpha(\beta^2-1-\alpha^2)+l\beta(\alpha^2-1-\beta^2)+(k+l)\alpha\beta(1-\alpha^2-\beta^2)).
\end{split}
\end{equation}
With $\left(k\sqrt{Z_1Z_2}+l\sqrt{Z_1Z_3}+(k+l)\sqrt{Z_2Z_3}\right)Z_4=2\Delta$, we have 
\begin{equation}
\label{eqn_homogenized boundary}
\zeta=\frac{2\Delta}{k\alpha+l\beta+(k+l)\alpha\beta}.
\end{equation}
The equation \eqref{eqn: technical computation+ 1-2} becomes
\begin{equation}
\label{eqn: technical computation+ 1-3}
\begin{split}
&\left.\left(\left(k\sqrt{Z_1Z_2}+l\sqrt{Z_1Z_3}+(k+l)\sqrt{Z_2Z_3}\right)Z_4\right)'\right|_{\left(k\sqrt{Z_1Z_2}+l\sqrt{Z_1Z_3}+(k+l)\sqrt{Z_2Z_3}\right)Z_4=2\Delta}
\\
&=\frac{Z_1\zeta}{k\alpha+l\beta+(k+l)\alpha\beta}(-\beta^2\Xi_0 l^2-\alpha\beta\Xi_1kl-\alpha^2\Xi_2k^2),
\end{split}
\end{equation}
where
\begin{equation}
\begin{split}
\Xi_0&=(\alpha+1)^2 \beta^2+(1-\alpha)(2\alpha^2+3\alpha+2) \beta+( \alpha^2-1)^2,\\
\Xi_1&=2\alpha\beta(\alpha-\beta)^2+(\alpha+\beta)(2\alpha^2-3\alpha\beta+2\beta^2)+(\alpha-\beta)^2+\alpha+\beta+2,\\
\Xi_2&=(\beta+1)^2\alpha^2+(1-\beta)(2\beta^2+3\beta+2)\alpha+(\beta^2-1)^2.\\
\end{split}
\end{equation}

Since $Z_2,Z_3\geq Z_1$ in $\mathcal{D}^+$, we consider each $\Xi_j$ for $(\alpha,\beta)\in [1,\infty)\times [1,\infty)$. Since the discriminant of $\Xi_0$ (as a quadratic function of $\beta$) is 
$$\delta_\beta (\Xi_0)=-\alpha(\alpha-1)^2(4\alpha^2+7\alpha+4)\leq 0,$$
 the function $\Xi_0\geq 0$ and vanishes only if $(\alpha,\beta)=(1,0)$. By the transformation $\alpha\leftrightarrow \beta$, the function $\Xi_2$ is also non-negative and vanishes only if $(\alpha,\beta)=(0,1)$. The function $\Xi_1$ is apparently positive. Therefore, the derivative \eqref{eqn: technical computation+ 1-1} is negative if $Z_1\neq 0$. 

If an integral curve leaves $\mathcal{D}^+$ non-transversally through $$\{\left(k\sqrt{Z_1Z_2}+l\sqrt{Z_1Z_3}+(k+l)\sqrt{Z_2Z_3}\right)Z_4=2\Delta\}\cap \{Z_1=0\},$$ we have
\begin{equation}
\label{eqn: technical computation 1-3}
\begin{split}
&\left.\left(\left(k\sqrt{Z_1Z_2}+l\sqrt{Z_1Z_3}+(k+l)\sqrt{Z_2Z_3}\right)Z_4\right)'\right|_{\{\left(k\sqrt{Z_1Z_2}+l\sqrt{Z_1Z_3}+(k+l)\sqrt{Z_2Z_3}\right)Z_4=2\Delta\}\cap \{Z_1=0\}}
\\
&=(k+l)\sqrt{Z_2Z_3}Z_4(X_4-X_1)\\
&=2\Delta\left(\frac{k+l}{\Delta}Z_2Z_3Z_4-Z_2-Z_3\right)\\
&=2\Delta\left(2\sqrt{Z_2Z_3}-Z_2-Z_3\right)\\
&\leq 0.
\end{split}
\end{equation}
Therefore, the non-transversal intersection satisfies 
$$(k+l)\sqrt{Z_2Z_3}Z_4=2\Delta,\quad Z_1=0,\quad Z_2=Z_3.$$ By \eqref{eqn: spin(7)} and \eqref{eqn: conservation Z spin(7) 1}, the intersection point is $P_0^{k+l}$, a contradiction. Hence, the set $\mathcal{D}^+$ is invariant.
\end{proof}

\subsection{$\mathcal{C}^-_{\spin(7)}$}
\label{subsec_-}
Define 
\begin{equation}
\label{eqn_D-}
\begin{split}
\mathcal{D}^-&:=\mathcal{C}^-_{\spin(7)}\cap \{Z_1\geq Z_2\}\cap \{Z_1\geq Z_3\}\\
&\quad \cap \left\{\left(k\sqrt{Z_1Z_2}+l\sqrt{Z_1Z_3}+(k+l)\sqrt{Z_2Z_3}\right)Z_4\leq 2\Delta\right\}.
\end{split}
\end{equation}

\begin{lemma}
\label{lem: invariant-}
The set $\mathcal{D}^-$ is invariant.
\end{lemma}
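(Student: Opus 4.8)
The plan is to mimic, \emph{mutatis mutandis}, the proof of Lemma~\ref{lem: invariant+}, exploiting the dihedral symmetry of the inequality \eqref{eqn_new inequality} among the pairs $(a,k+l)$, $(b,l)$, $(c,k)$. Concretely, the roles of the three summands get permuted: in $\mathcal{C}^-_{\spin(7)}$ the sign of $f$ is reversed, so the sign patterns in $\mathcal{S}_j^-$ are opposite to those in $\mathcal{S}_j^+$, and the relevant ordering becomes $Z_1\geq Z_2$, $Z_1\geq Z_3$ rather than $Z_1$ minimal. First I would check the two ``ordering'' faces. On $\{Z_2-Z_1=0\}$ and $\{Z_3-Z_1=0\}$ one computes $(Z_1-Z_2)'|_{Z_1=Z_2}=2Z_2(X_1-X_2)$ and $(Z_1-Z_3)'|_{Z_1=Z_3}=2Z_3(X_1-X_3)$, and substituting the $\mathcal{S}_j^-$ expressions for $X_1-X_2$ and $X_1-X_3$ shows these equal $2Z_2\bigl(\tfrac{l}{2\Delta}Z_1Z_3Z_4+\tfrac{k+l}{2\Delta}Z_2Z_3Z_4\bigr)\geq 0$ and $2Z_3\bigl(\tfrac{k}{2\Delta}Z_1Z_2Z_4+\tfrac{k+l}{2\Delta}Z_2Z_3Z_4\bigr)\geq 0$ (signs being exactly what one needs because the chirality flip cancels against the reversed inequality). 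So the integral curve cannot exit through either ordering face.

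The main work is the third face $\{(k\sqrt{Z_1Z_2}+l\sqrt{Z_1Z_3}+(k+l)\sqrt{Z_2Z_3})Z_4=2\Delta\}$. I would differentiate exactly as in \eqref{eqn: technical computation+ 1-1}, now using the $\mathcal{S}_j^-$ relations (so the $X_4$ term and the $-X_j$ terms carry the opposite signs). Introducing the same substitution $\zeta=Z_1Z_4$, $\alpha=\sqrt{Z_2/Z_1}$, $\beta=\sqrt{Z_3/Z_1}$ and homogenizing via $\zeta=2\Delta/(k\alpha+l\beta+(k+l)\alpha\beta)$, the derivative should again factor as
\begin{equation*}
\frac{Z_1\zeta}{k\alpha+l\beta+(k+l)\alpha\beta}\bigl(-\beta^2\Xi_0' l^2-\alpha\beta\Xi_1' kl-\alpha^2\Xi_2' k^2\bigr)
\end{equation*}
for polynomials $\Xi_j'$ in $(\alpha,\beta)$. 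The key structural point is that in $\mathcal{D}^-$ we now have $\alpha,\beta\in(0,1]$, so I must verify non-negativity of the $\Xi_j'$ on $(0,1]\times(0,1]$ rather than on $[1,\infty)^2$. I expect these $\Xi_j'$ to be obtainable from the $\Xi_j$ of Lemma~\ref{lem: invariant+} by the substitution $\alpha\mapsto 1/\alpha$, $\beta\mapsto 1/\beta$ up to clearing denominators — which is precisely the duality exchanging $Z_1$ being smallest with $Z_1$ being largest — so the discriminant computations $\delta_\beta(\Xi_0')\leq 0$ etc.\ transfer directly. This is the step I expect to be the main obstacle: checking that the algebra genuinely has this dual form and that the resulting quadratics are sign-definite on the correct quadrant, with equality only at degenerate corner points.

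Finally I would handle the non-transversal case. If an integral curve touches the boundary face at a point where the above derivative vanishes, the only possibilities are the degenerate loci where some $\Xi_j'$ vanishes, i.e.\ at corner points like $Z_2=0$ or $Z_3=0$ together with the boundary equation. Paralleling \eqref{eqn: technical computation 1-3}, I would restrict to $\{Z_2=0\}$ (resp.\ $\{Z_3=0\}$), where the face derivative simplifies using $\mathcal{S}_j^-$ and the relation \eqref{eqn: conservation Z spin(7) 1} to something of the form $2\Delta(2\sqrt{Z_1Z_3}-Z_1-Z_3)\leq 0$ (resp.\ the analogue), forcing $Z_1=Z_3$ (resp.\ $Z_1=Z_2$); combined with the boundary equation and the $\spin(7)$ constraints this pins the point down to $P_0^{l}$ (resp.\ $P_0^{k}$), a critical point, contradicting the assumption that a non-transversal exit occurs along a genuine integral curve. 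Hence $\mathcal{D}^-$ is invariant.
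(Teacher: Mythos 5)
Your overall strategy — guard the two ordering faces via the $\mathcal{S}_j^-$ relations, then differentiate the boundary function $(k\sqrt{Z_1Z_2}+l\sqrt{Z_1Z_3}+(k+l)\sqrt{Z_2Z_3})Z_4$ on the third face, homogenize with $\zeta=Z_1Z_4$, $\alpha=\sqrt{Z_2/Z_1}$, $\beta=\sqrt{Z_3/Z_1}$, and argue sign-definiteness of the resulting quadratic form in $(k,l)$ — is exactly the paper's approach, and your computations on the ordering faces match \eqref{eqn: Z1>Z2}–\eqref{eqn: Z1>Z3}. Your non-transversal discussion (restrict to $\{Z_2=0\}$ or $\{Z_3=0\}$, obtain $2\sqrt{Z_1Z_3}-Z_1-Z_3\leq 0$, pin down $P_0^l$ and $P_0^k$) is also correct, if slightly more elaborate than the paper's, which simply observes that the vanishing locus of the face derivative consists of critical points.

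The genuine gap is the ``duality'' hypothesis. The polynomials you call $\Xi_j'$ (the paper's $\Theta_j$) are \emph{not} obtained from the $\Xi_j$ of Lemma~\ref{lem: invariant+} by the inversion $\alpha\mapsto 1/\alpha$, $\beta\mapsto 1/\beta$ up to clearing denominators. If you carry out that substitution on $\Xi_0$, for example, you get $\alpha^4\beta^2\,\Xi_0(1/\alpha,1/\beta)=(\alpha+1)^2\alpha^2+(\alpha-1)(2\alpha^2+3\alpha+2)\alpha\beta+(\alpha^2-1)^2\beta^2$, which does not equal $\Theta_0=(\alpha+1)^2\beta^2+(\alpha-1)(2\alpha^2+3\alpha+2)\beta+(\alpha^2-1)^2$. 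What actually happens is that the chirality flip reverses the sign of each $Z_4$-term, so $\Theta_0$ and $\Theta_2$ differ from $\Xi_0$ and $\Xi_2$ only by the sign of the linear coefficient in $\beta$ (resp. $\alpha$); the discriminant computation transfers not because of duality but because the squared linear coefficient $(\alpha-1)^2=(1-\alpha)^2$ is the same. More importantly, the mixed polynomial $\Theta_1$ is genuinely different from $\Xi_1$: unlike $\Xi_1$, which is a sum of manifestly non-negative pieces plus $2$, the polynomial $\Theta_1$ contains the terms $-(\alpha+\beta)^2-(\alpha+\beta)$ and its positivity on $[0,\infty)^2$ is not immediate. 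The paper proves $\Theta_1>0$ by locating interior critical points on the loci $\{\alpha=\beta\}$ and $\{2\alpha+2\beta=5\}$ and checking boundary values, an argument with no counterpart in Lemma~\ref{lem: invariant+}. Your proof plan has no fallback once the assumed duality is seen to fail, and the $\Theta_1$ positivity — on which your later non-transversal reduction also depends, since you need $\Theta_1>0$ to force $\alpha\beta=0$ on the vanishing locus — is left unproved.
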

\begin{proof}
By \eqref{eqn: new Einstein equation} and \eqref{eqn: spin(7)}, we have 
\begin{equation}
\label{eqn: Z1>Z2}
\begin{split}
\left.(Z_1-Z_2)'\right|_{Z_1-Z_2=0}&=2Z_1(X_1-X_2)\\
&=2Z_1\left(\frac{k+l}{2\Delta}Z_2Z_3Z_4+\frac{l}{2\Delta}Z_1Z_3Z_4\right)\\
&\geq 0
\end{split},
\end{equation}
\begin{equation}
\label{eqn: Z1>Z3}
\begin{split}
\left.(Z_1-Z_3)'\right|_{Z_1-Z_3=0}&=2Z_1(X_1-X_3)\\
&=2Z_1\left(\frac{k+l}{2\Delta}Z_2Z_3Z_4+\frac{k}{2\Delta}Z_1Z_2Z_4\right)\\
&\geq 0
\end{split}.
\end{equation}
For $\mathcal{D}^-\cap \{\left(k\sqrt{Z_1Z_2}+l\sqrt{Z_1Z_3}+(k+l)\sqrt{Z_2Z_3}\right)Z_4= 2\Delta\}$, we have 
\begin{equation}
\label{eqn: technical computation 2-1}
\begin{split}
&\left.\left(\left(k\sqrt{Z_1Z_2}+l\sqrt{Z_1Z_3}+(k+l)\sqrt{Z_2Z_3}\right)Z_4\right)'\right|_{\left(k\sqrt{Z_1Z_2}+l\sqrt{Z_1Z_3}+(k+l)\sqrt{Z_2Z_3}\right)Z_4=2\Delta}
\\
&=k\sqrt{Z_1Z_2}Z_4(X_4-X_3)+l\sqrt{Z_1Z_3}Z_4(X_4-X_2)+(k+l)\sqrt{Z_2Z_3}Z_4(X_4-X_1)\\
&=Z_4(k\sqrt{Z_1Z_2}+l\sqrt{Z_1Z_3}+(k+l)\sqrt{Z_2Z_3})\left(-\frac{k+l}{2\Delta}Z_2Z_3Z_4+\frac{l}{2\Delta}Z_1Z_3Z_4+\frac{k}{2\Delta}Z_1Z_2Z_4\right) \\
&\quad +k\sqrt{Z_1Z_2}Z_4\left(\frac{k}{2\Delta}Z_1Z_2Z_4+Z_3-Z_1-Z_2\right)\\
&\quad +l\sqrt{Z_1Z_3}Z_4\left(\frac{l}{2\Delta}Z_1Z_3Z_4+Z_2-Z_3-Z_1\right)\\
&\quad +(k+l)\sqrt{Z_2Z_3}Z_4\left(-\frac{k+l}{2\Delta}Z_2Z_3Z_4+Z_1-Z_2-Z_3\right).\\
&=Z_1\zeta^2(k\alpha+l\beta+(k+l)\alpha\beta)\left(-\frac{k+l}{2\Delta}\alpha^2\beta^2+\frac{l}{2\Delta}\beta^2+\frac{k}{2\Delta}\alpha^2\right)\\
&\quad +Z_1\zeta^2\left(-\frac{(k+l)^2}{2\Delta}\alpha^3\beta^3+\frac{l^2}{2\Delta}\beta^3+\frac{k^2}{2\Delta}\alpha^3\right)\\
&\quad +Z_1\zeta(k\alpha(\beta^2-1-\alpha^2)+l\beta(\alpha^2-1-\beta^2)+(k+l)\alpha\beta(1-\alpha^2-\beta^2))
\end{split}
\end{equation}
where $(\zeta,\alpha,\beta)$ are as in \eqref{eqn: alpha beta gamma in techinal}. Again by \eqref{eqn_homogenized boundary}, the equation \eqref{eqn: technical computation 2-1} becomes
\begin{equation}
\label{eqn: technical computation 2-2}
\begin{split}
&\left.\left(\left(k\sqrt{Z_1Z_2}+l\sqrt{Z_1Z_3}+(k+l)\sqrt{Z_2Z_3}\right)Z_4\right)'\right|_{\left(k\sqrt{Z_1Z_2}+l\sqrt{Z_1Z_3}+(k+l)\sqrt{Z_2Z_3}\right)Z_4=2\Delta}
\\
&=\frac{Z_1\zeta}{k\alpha+l\beta+(k+l)\alpha\beta}(-\beta^2\Theta_0 l^2-\alpha\beta\Theta_1kl-\alpha^2\Theta_2k^2),
\end{split}
\end{equation}
where
\begin{equation}
\begin{split}
\Theta_0&=(\alpha+1)^2 \beta^2+(\alpha-1)(2\alpha^2+3\alpha+2) \beta+( \alpha^2-1)^2\\
\Theta_1&=2\alpha\beta (\alpha+\beta)^2 + (\alpha+\beta)(2\alpha^2-\alpha\beta+2\beta^2) - (\alpha+\beta)^2 - ( \alpha +  \beta) + 2\\
\Theta_2&=(\beta+1)^2\alpha^2+(\beta-1)(2\beta^2+3\beta+2)\alpha+(\beta^2-1)^2\\
\end{split}
\end{equation}

Since $Z_1\geq Z_2,Z_3$ in $\mathcal{D}^-$, we consider each $\Theta_j$ for $(\alpha,\beta)\in [0,1]\times [0,1]$. Note that
$$\delta_\beta (\Theta_0)=\delta_\beta (\Xi_0)\leq 0.$$ 
Thus the function $\Theta_0\geq 0$ and vanishes only if $(\alpha,\beta)\in \{(1,0),(0,1)\}$. By the transformation $\alpha\leftrightarrow \beta$, the function $\Theta_2$ has the same property. We show that $\Theta_1> 0$ in the following. 

Since
$$
\Theta_1(0,\beta)=(\beta+1)(2\beta^2-3\beta+2)>0,\quad \Theta_1(\alpha,0)=(\alpha+1)(2\alpha^2-3\alpha+2)>0,
$$
it suffices to show that the minimum of $\Theta_1$ in $[0,+\infty)\times [0,+\infty)$ is positive. Since 
$$
\frac{\partial \Theta_1}{\partial \alpha}-\frac{\partial \Theta_1}{\partial \beta}=(\beta-\alpha)(\beta+\alpha)(2\beta+2\alpha-5),
$$
the minimum of $\Theta_1$ in the interior (if it exists) satisfies $2\beta+2\alpha-5$ or $\alpha=\beta$.
Since 
$
\Theta_1\left(\alpha,\frac{5-2\alpha}{2}\right)=\frac{49}{2}>0,
$ and 
$$
\Theta_1(\alpha,\alpha)=8\alpha^4 + 6\alpha^3 - 4\alpha^2 - 2\alpha + 2=3\alpha^4+2\alpha^3+(\alpha^4-\alpha^2+1)+(2\alpha^2+\alpha-1)^2>0,
$$
the function $\Theta_1$ is positive on $[0,+\infty)\times [0,+\infty)$. Since $Z_1\geq Z_2,Z_3$ in $\mathcal{D}^-$, the vanishing of $Z_1$ forces $Z_2$ and $Z_3$ to vanish. Therefore, the variable $Z_1$ does not vanish and one of $\alpha,\beta$ must be positive on the boundary 
$$\mathcal{D}^-\cap \{\left(k\sqrt{Z_1Z_2}+l\sqrt{Z_1Z_3}+(k+l)\sqrt{Z_2Z_3}\right)Z_4=2\Delta\}.$$
Therefore, the polynomial $-\beta^2\Theta_0 l^2-\alpha\beta\Theta_1kl-\alpha^2\Theta_2k^2\leq 0$ and only vanishes at $(1,0)$ and $(0,1)$. Equivalently, the derivative \eqref{eqn: technical computation 2-2} is non-positive and only vanishes at critical points $P_0^{k}$ and $P_0^{l}$. The proof is complete.
\end{proof}

\begin{proposition}
\label{prop: bounded Z1Z2Z3}
Each $Z_j$ with $j\in\{1,2,3\}$ is bounded in $\mathcal{D}^\pm$. 
\end{proposition}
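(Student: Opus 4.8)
The plan is to reduce the whole statement to an a priori upper bound on $X_4$. Since $\mathcal{D}^\pm\subset\mathcal{C}_{\spin(7)}^\pm$, the $\spin(7)$ conservation law \eqref{eqn: conservation Z spin(7) 2} holds on $\mathcal{D}^\pm$ in the form $2(Z_1+Z_2+Z_3)=1+X_4$. Combined with $X_4\geq 0$ (Proposition~\ref{prop: X4 not negative}) and $Z_j\geq 0$, this shows that $Z_1,Z_2,Z_3$ are bounded on $\mathcal{D}^\pm$ if and only if $X_4$ is, and indeed that $Z_1+Z_2+Z_3=\frac{1+X_4}{2}$ once the bound on $X_4$ is in hand. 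The one structural point to keep in mind is that $Z_4$ itself need not be bounded on $\mathcal{D}^\pm$, so a cubic term $Z_iZ_jZ_4$ cannot be estimated directly; instead I would always split it as $\sqrt{Z_iZ_j}\cdot\bigl(\sqrt{Z_iZ_j}\,Z_4\bigr)$, since it is precisely the scale-invariant combinations $\sqrt{Z_iZ_j}\,Z_4$ that the defining inequalities \eqref{eqn_D+}, \eqref{eqn_D-} control.

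On $\mathcal{D}^+$, start from $\mathcal{S}_4^+$ in \eqref{eqn: spin(7)}, namely $X_4=\bigl(\tfrac{k+l}{2\Delta}Z_2Z_3-\tfrac{l}{2\Delta}Z_1Z_3-\tfrac{k}{2\Delta}Z_1Z_2\bigr)Z_4$, discard the two non-positive terms, and rewrite the remaining one as above to get $X_4\leq\tfrac{1}{2\Delta}\sqrt{Z_2Z_3}\cdot\bigl((k+l)\sqrt{Z_2Z_3}\,Z_4\bigr)\leq\sqrt{Z_2Z_3}$, the last inequality by the defining condition of $\mathcal{D}^+$. Then AM--GM together with $Z_1\geq 0$ gives $X_4\leq\tfrac12(Z_2+Z_3)\leq\tfrac12(Z_1+Z_2+Z_3)=\tfrac{1+X_4}{4}$, so $X_4\leq\tfrac13$ and hence $Z_1+Z_2+Z_3=\tfrac{1+X_4}{2}\leq\tfrac23$; in particular $0\leq Z_j\leq\tfrac23$ for $j\in\{1,2,3\}$.

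On $\mathcal{D}^-$, start from $\mathcal{S}_4^-$, namely $X_4=\bigl(\tfrac{l}{2\Delta}Z_1Z_3+\tfrac{k}{2\Delta}Z_1Z_2-\tfrac{k+l}{2\Delta}Z_2Z_3\bigr)Z_4$, discard the non-positive term, and use $Z_2,Z_3\leq Z_1$ (valid on $\mathcal{D}^-$) to bound $\sqrt{Z_1Z_3},\sqrt{Z_1Z_2}\leq Z_1$, which yields $X_4\leq\tfrac{Z_1}{2\Delta}\bigl(l\sqrt{Z_1Z_3}\,Z_4+k\sqrt{Z_1Z_2}\,Z_4\bigr)\leq Z_1$ by the defining condition of $\mathcal{D}^-$. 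Substituting back into $2(Z_1+Z_2+Z_3)=1+X_4$ and using $Z_2,Z_3\geq 0$ gives $2Z_1\leq 1+Z_1$, hence $Z_1\leq 1$, and then $Z_1+Z_2+Z_3=\tfrac{1+X_4}{2}\leq\tfrac{1+Z_1}{2}\leq 1$; so $0\leq Z_j\leq 1$ for $j\in\{1,2,3\}$.

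The argument is short and I do not anticipate a genuine obstacle. The only subtlety, already flagged, is that each estimate must be arranged so that $Z_4$ enters only through the combinations $\sqrt{Z_iZ_j}\,Z_4$ that are bounded by the fiber inequality, never through $Z_4$ alone; getting the right square-root pairing in the two cases is what the two short computations above accomplish.
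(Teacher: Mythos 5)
Your proof is correct, and it rests on the same two ingredients as the paper's (the conservation law linking $Z_1+Z_2+Z_3$ to $X_4$ and the defining fiber inequality of $\mathcal{D}^\pm$), but you arrange them differently in a way that buys a genuine simplification. The paper works directly with the identity $2(Z_1+Z_2+Z_3)=1+\bigl(\cdots\bigr)Z_4$ from \eqref{eqn: conservation Z spin(7) 1}, and to exploit the constraint it solves the defining inequality for $Z_4$, i.e.\ divides by $k\sqrt{Z_1Z_2}+l\sqrt{Z_1Z_3}+(k+l)\sqrt{Z_2Z_3}$; this forces it to carve out the degenerate case where that quantity vanishes and treat it separately. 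You instead extract an a priori bound on $X_4$ from $\mathcal{S}_4^\pm$ by dropping the negative contributions and multiplying (never dividing) by the bounded scale-invariant combinations $\sqrt{Z_iZ_j}\,Z_4\le 2\Delta/(k+l)$, etc.; the degenerate case then never arises, since the inequality $X_4\le\sqrt{Z_2Z_3}$ (resp.\ $X_4\le Z_1$) is simply $0\le 0$ there. Feeding this back through $2(Z_1+Z_2+Z_3)=1+X_4$ gives the same kind of bound the paper gets and in fact cleaner explicit constants ($Z_1+Z_2+Z_3\le\tfrac23$ on $\mathcal{D}^+$ and $\le 1$ on $\mathcal{D}^-$). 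So: same underlying mechanism, but your reorganization around the single quantity $X_4$ removes the case split and makes the role of the fiber inequality transparent.
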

\begin{proof}
If $k\sqrt{Z_1Z_2}+l\sqrt{Z_1Z_3}+(k+l)\sqrt{Z_2Z_3}=0$, at least two of $Z_1$, $Z_2$, $Z_3$ vanish. Hence, the variable $X_4$ vanishes by \eqref{eqn: spin(7)}. The equation \eqref{eqn: conservation Z spin(7) 2} implies
$$2(Z_1+Z_2+Z_3)=1+X_4=1.
$$
Therefore, each $Z_j$ with $j\in\{1,2,3\}$ is bounded in this case.

Now consider the case $k\sqrt{Z_1Z_2}+l\sqrt{Z_1Z_3}+(k+l)\sqrt{Z_2Z_3}\neq 0$.
By \eqref{eqn: conservation Z spin(7) 1} and the definition of $\mathcal{D}^+$, we have
\begin{equation}
\begin{split}
2(Z_1+Z_2+Z_3)&=1+\left(\frac{l}{2\Delta}(Z_2-Z_1)Z_3+\frac{k}{2\Delta}(Z_3-Z_1)Z_2\right)Z_4\\
&\leq 1+\frac{l(Z_2-Z_1)Z_3+k(Z_3-Z_1)Z_2}{k\sqrt{Z_1Z_2}+l\sqrt{Z_1Z_3}+(k+l)\sqrt{Z_2Z_3}}\\
&\leq 1+\frac{(k+l)Z_2Z_3}{(k+l)\sqrt{Z_2Z_3}}\\
&= 1+\sqrt{Z_2Z_3}.
\end{split}
\end{equation}
Therefore, we have $2Z_1+\frac{1}{2}(\sqrt{Z_2}-\sqrt{Z_3})^2+\frac{3}{2}(Z_2+Z_3)\leq 1$. Thus $Z_1$, $Z_2$, $Z_3$ are all bounded. 

On the other hand, by \eqref{eqn: conservation Z spin(7) 1} and the definition of $\mathcal{D}^-$, we have
\begin{equation}
\begin{split}
2(Z_1+Z_2+Z_3)&=1+\left(\frac{l}{2\Delta}(Z_1-Z_2)Z_3+\frac{k}{2\Delta}(Z_1-Z_3)Z_2\right)Z_4\\
&\leq 1+\frac{l(Z_1-Z_2)Z_3+k(Z_1-Z_3)Z_2}{k\sqrt{Z_1Z_2}+l\sqrt{Z_1Z_3}+(k+l)\sqrt{Z_2Z_3}}\\
&\leq 1+\frac{lZ_1Z_3+kZ_1Z_2}{k\sqrt{Z_1Z_2}+l\sqrt{Z_1Z_3}}\\
&\leq 1+\frac{lZ_1Z_3+kZ_1Z_2}{kZ_2+lZ_3}\quad \text{since $Z_1\geq Z_2,Z_3$ in $\mathcal{D}^-$}\\
&= 1+Z_1.
\end{split}
\end{equation}
Therefore, we have $Z_1+2Z_2+2Z_3\leq 1$, and each of $Z_1$, $Z_2$, $Z_3$ is bounded. 
\end{proof}

\subsection{Existence of ALC metrics}

\begin{proposition}
\label{prop_compact subset}
If a $\gamma_\theta^i$ with $\theta\in (0,\pi)$ enters $\mathcal{D}^\pm$, the integral curve is defined on $\mathbb{R}$. 
\end{proposition}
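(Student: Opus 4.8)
The plan is to show that once $\gamma_\theta^i$ enters the invariant set $\mathcal{D}^{\pm}$, it remains trapped in a compact set for all forward $\eta$, and hence is defined on all of $\mathbb{R}$. The combinatorial setup pairs the three cases naturally: $\gamma_\theta^{k+l}$ lives in $\mathcal{C}^+_{\spin(7)}$ and one checks it enters $\mathcal{D}^+$, while $\gamma_\theta^{l}$ and $\gamma_\theta^{k}$ live in $\mathcal{C}^-_{\spin(7)}$ and enter $\mathcal{D}^-$. First I would invoke Lemma~\ref{lem: invariant+} and Lemma~\ref{lem: invariant-}: once the integral curve enters $\mathcal{D}^{\pm}$, invariance keeps it there for all subsequent $\eta$. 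Then Proposition~\ref{prop: bounded Z1Z2Z3} gives uniform bounds on $Z_1,Z_2,Z_3$ inside $\mathcal{D}^{\pm}$.

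The remaining task is to bound the other four coordinates $X_1,X_2,X_3,X_4,Z_4$. For $X_4$ and $Z_1+Z_2+Z_3$ one uses the $\spin(7)$ conservation law \eqref{eqn: conservation Z spin(7) 2}, namely $2(Z_1+Z_2+Z_3)-X_4=1$; since $Z_1,Z_2,Z_3$ are bounded and non-negative, $X_4$ is bounded (and $X_4\in[0,1]$ by Proposition~\ref{prop: X4 not negative} together with this relation). For $X_1,X_2,X_3$, I would use the defining equations of $\mathcal{S}_j^{\pm}$ in \eqref{eqn: spin(7)}: each $X_j$ is a polynomial in $Z_1,Z_2,Z_3,Z_4$, so it suffices to bound $Z_4$. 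To bound $Z_4$ inside $\mathcal{D}^{\pm}$, I would exploit the key inequality built into the definition of $\mathcal{D}^{\pm}$, namely $\left(k\sqrt{Z_1Z_2}+l\sqrt{Z_1Z_3}+(k+l)\sqrt{Z_2Z_3}\right)Z_4\leq 2\Delta$, combined with a lower bound on $k\sqrt{Z_1Z_2}+l\sqrt{Z_1Z_3}+(k+l)\sqrt{Z_2Z_3}$ away from the degenerate locus. The subtlety is that this quantity can approach $0$, which happens only when at least two of $Z_1,Z_2,Z_3$ degenerate; in $\mathcal{D}^+$ this means $Z_1\to 0$ and either $Z_2$ or $Z_3$ small (recall $Z_1\le Z_2,Z_3$), and in $\mathcal{D}^-$ it means $Z_2,Z_3\to 0$. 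On that degenerate locus, however, \eqref{eqn: spin(7)} forces $X_4=0$ and hence, by $\mathcal{S}_4^{\pm}$ again, $\frac{k+l}{2\Delta}Z_2Z_3Z_4$ etc.\ are controlled; more directly, on $\{Z_1=0\}\cap\mathcal{D}^+$ the curve lies on an invariant subset which by \eqref{eqn_W curve} or the local analysis near $P_0^{k+l}$ is understood, and a similar remark applies on $\mathcal{D}^-$.

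Here is the cleaner route I expect to take for bounding $Z_4$: argue that an integral curve in the interior $\theta\in(0,\pi)$ has all $Z_j>0$, so it stays in the open part $\{Z_1,Z_2,Z_3>0\}$ of $\mathcal{D}^{\pm}$ where the invariant sets $\{Z_j=0\}$ cannot be reached; on any finite $\eta$-interval the $Z_j$ are then bounded below away from zero by continuity, but one needs this uniformly as $\eta\to\infty$. I would instead show that the relevant quantities $Z_1Z_4$, $Z_2Z_4$, $Z_3Z_4$ are bounded: from $\mathcal{D}^{\pm}$ we have $(k+l)\sqrt{Z_2Z_3}\,Z_4\le 2\Delta$, so $Z_2Z_3Z_4^2$ is bounded, and then the defining relations \eqref{eqn: spin(7)} express each $X_j$ in terms of $Z_1,Z_2,Z_3$ and the bounded products $Z_1Z_3Z_4$, $Z_1Z_2Z_4$, $Z_2Z_3Z_4$ --- each of these products is bounded since e.g.\ $Z_2Z_3Z_4\le \frac{2\Delta}{k+l}$ directly, $Z_1Z_3Z_4\le Z_2Z_3Z_4\cdot\frac{Z_1}{Z_2}$ is bounded in $\mathcal{D}^-$ (and $\le Z_2 Z_3 Z_4$ type estimates in $\mathcal{D}^+$ using $Z_1\le Z_2,Z_3$), and likewise for $Z_1Z_2Z_4$. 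Hence all $X_j$ are bounded. So the curve stays in a bounded, closed (hence compact) subset of $\mathcal{C}^{\pm}_{\spin(7)}$; by the standard ODE escape argument the solution extends to all $\eta\in\mathbb{R}$, and then $\lim_{\eta\to\infty}t=\infty$ by \cite[Lemma 5.1]{buzano_family_2015}, so the metric is forward complete. The main obstacle is precisely the uniform control of the mixed products $Z_1Z_2Z_4$, $Z_1Z_3Z_4$ (as opposed to $Z_2Z_3Z_4$, which the inequality bounds outright): one must use the ordering constraints $Z_1\le Z_2,Z_3$ in $\mathcal{D}^+$ and $Z_1\ge Z_2,Z_3$ in $\mathcal{D}^-$ to trade one factor for another, and handle the boundary strata $\{Z_j=0\}$ where interior curves do not go but limiting behaviour must still be excluded.
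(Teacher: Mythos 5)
Your overall plan matches the paper: invoke invariance of $\mathcal{D}^\pm$, use Proposition~\ref{prop: bounded Z1Z2Z3} for $Z_1,Z_2,Z_3$, use \eqref{eqn: conservation Z spin(7) 2} for $X_4$, and use \eqref{eqn: spin(7)} for $X_1,X_2,X_3$. You also correctly identify where the difficulty lies, namely controlling $Z_4$. But your attempt to bridge that step has a genuine gap, and a subsidiary estimate is incorrect.

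The gap: you bound the \emph{mixed products} $Z_iZ_jZ_4$ (and indeed $\sqrt{Z_iZ_j}Z_4\le 2\Delta/\max(k,l,k+l)$ directly from the defining inequality of $\mathcal{D}^\pm$, since each summand is non-negative — no need for the ordering tricks), which does suffice to bound every $X_j$ via \eqref{eqn: spin(7)}. However, this does \emph{not} bound $Z_4$ itself, and your final sentence "the curve stays in a bounded, closed (hence compact) subset" does not follow from what you proved. Inside $\mathcal{D}^\pm$ one can have, say, $Z_2\to 0$ while $Z_3$ stays away from $0$, making $\sqrt{Z_2Z_3}\to 0$; the constraint $(k+l)\sqrt{Z_2Z_3}\,Z_4\le 2\Delta$ then places no bound on $Z_4$, and the conservation laws do not rescue this. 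The paper supplies the missing bound with a monotone quantity: by \eqref{eqn_trace=1} and Cauchy--Schwarz one has $G\ge\tfrac17$, and a direct computation gives $\left(\tfrac{Z_4}{Z_1^2Z_2^2Z_3^2}\right)'=\tfrac{Z_4}{Z_1^2Z_2^2Z_3^2}(1-7G)\le 0$. For $\theta\in(0,\pi)$ this quantity is finite and positive once the curve leaves $P_0^i$, so $Z_4\le\mu\, Z_1^2Z_2^2Z_3^2$ for all later $\eta$, and boundedness of $Z_1,Z_2,Z_3$ then forces $Z_4$ bounded. Without this (or some substitute, such as observing that all RHS components of \eqref{eqn: new Einstein equation} are bounded by your estimates except $Z_4'=Z_4(-G+X_4)$, which gives at most exponential growth — an argument you do not actually make), the compactness claim is unjustified.

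A smaller error: "$Z_1Z_3Z_4\le Z_2Z_3Z_4\cdot\frac{Z_1}{Z_2}$ is bounded in $\mathcal{D}^-$" is false as stated, since in $\mathcal{D}^-$ one has $Z_1\ge Z_2$, so $Z_1/Z_2\ge 1$ is unbounded as $Z_2\to 0$. The useful bound is the direct one, $l\sqrt{Z_1Z_3}\,Z_4\le 2\Delta$ giving $Z_1Z_3Z_4\le\sqrt{Z_1Z_3}\cdot 2\Delta/l$, using that $l\ge 1$ in the generic case.
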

\begin{proof}
By \eqref{eqn_trace=1} and the Cauchy--Schwarz inequality we have $G\ge \tfrac{1}{7}$. For $\theta\in(0,\pi)$ the quantity $\frac{Z_4}{Z_1^2Z_2^2Z_3^2}$ is positive once $\gamma_\theta^i$ leaves $P_0^i$. A direct computation yields
\begin{equation}
\label{eqn_volume_derivative}
\left(\frac{Z_4}{Z_1^2Z_2^2Z_3^2}\right)'=\frac{Z_4}{Z_1^2Z_2^2Z_3^2}(1-7G)\leq 0,
\end{equation}
so $\frac{Z_4}{Z_1^2Z_2^2Z_3^2}$ is non-increasing along the integral curve. Let $\mu$ denote its value at some $\eta_*$. Then for all $\eta>\eta_*$ we have $Z_4\leq \mu Z_1^2Z_2^2Z_3^2$. 

As the integral curve enters $\mathcal{D}^\pm$, each of $Z_1$, $Z_2$, $Z_3$ is bounded by Proposition \ref{prop: bounded Z1Z2Z3}. Hence, the function $Z_4$ is also bounded along the integral curve, and so is each $X_j$ by \eqref{eqn: spin(7)}. Subsequently, the integral curve stays in a compact subset of $\mathcal{D}^\pm$ by Lemma \ref{lem: invariant+}-\ref{lem: invariant-}. Thus the integral curve is defined on $\mathbb{R}$.
\end{proof}

The critical point $P_0^{k+l}$ is on the boundary of $\mathcal{D}^+$, while $P_0^{k}$ and $P_0^{l}$ are on the boundary of $\mathcal{D}^-$. Although the inequality \eqref{eqn_new inequality} fails to hold initially for $\theta\in (0,\pi)$, we show that for a $\theta>0$ sufficiently small, the integral curve $\gamma_\theta^i$ eventually satisfies the inequality and consequently enters the corresponding invariant set at finite time. This provides a unified construction for all three families of forward complete $\spin(7)$ metrics.

\begin{lemma}
\label{lem_ALC metrics}
For each $i\in \{k+l,l,k\}$, there exists a sufficiently small $\theta_*>0$ such that each $\gamma_{\theta}^{i}$ eventually enters $\mathcal{D}^\pm$ if $\theta\in [0,\theta_*)$.
\end{lemma}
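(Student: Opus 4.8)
The plan is to compare $\gamma^i_\theta$ for small $\theta>0$ with the boundary integral curve $\gamma^i_0$, which I first show lies strictly inside the inequality \eqref{eqn_new inequality} defining $\mathcal{D}^\pm$, and then to use continuity in the angle $\theta$ to transfer this to $\gamma^i_\theta$ at a fixed finite value of $\eta$. So the first step is to analyse $\gamma^i_0$. By \eqref{eqn_W curve} this curve lies on $\mathcal{W}_i$, where all eight coordinates are functions of a single one of $Z_1,Z_2,Z_3$. Take $i=k+l$: along $\mathcal{W}_{k+l}$ we have $Z_1=0$, $Z_3=Z_2$ and $Z_4=\frac{2\Delta}{k+l}\frac{4Z_2-1}{Z_2^2}$, so the left-hand quantity of the $\mathcal{D}^+$ inequality equals $\left(k\sqrt{Z_1Z_2}+l\sqrt{Z_1Z_3}+(k+l)\sqrt{Z_2Z_3}\right)Z_4=(k+l)Z_2Z_4=2\Delta\frac{4Z_2-1}{Z_2}$, which is $<2\Delta$ precisely when $Z_2<\frac13$ and equals $2\Delta$ at $P_0^{k+l}$ (where $Z_2=\frac13$). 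Since $X_2=X_3=0$ and $X_1=1-2Z_2$ on $\mathcal{W}_{k+l}$, one has $G=2(1-2Z_2)^2+(4Z_2-1)^2$ and the scalar equation $Z_2'=Z_2(G-1+2Z_2)=2Z_2(3Z_2-1)(4Z_2-1)$, which is negative for $Z_2\in(\frac14,\frac13)$. As $\gamma^{k+l}_0$ joins $P_0^{k+l}$ ($Z_2=\frac13$) to $Q_0^{k+l}$ ($Z_2=\frac14$), this forces $Z_2\in(\frac14,\frac13)$ for all finite $\eta$, so the $\mathcal{D}^+$ inequality is strict along $\gamma^{k+l}_0\setminus\{P_0^{k+l}\}$; the ordering conditions $Z_1\le Z_2,Z_3$ are trivial since $Z_1=0$. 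The cases $i=l$ and $i=k$ are handled identically with $Z_2$ replaced by $Z_3$, resp.\ $Z_1$, the quantity becoming $2\Delta\frac{4Z_3-1}{Z_3}$, resp.\ $2\Delta\frac{4Z_1-1}{Z_1}$.

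The second step is to observe that for every $\theta\in(0,\pi)$ the only defining condition of $\mathcal{D}^\pm$ that $\gamma^i_\theta$ can violate is \eqref{eqn_new inequality}. From the expansion \eqref{eqn_linearized solution} the relevant coordinate differences are strictly positive near $P_0^i$ as $\eta\to-\infty$: $Z_2-Z_1,Z_3-Z_1\to\frac13$ for $i=k+l$; $Z_1-Z_2\to\frac13$ and $Z_1-Z_3\sim s_2(l+2k)e^{2\eta/3}>0$ for $i=l$; symmetrically $Z_1-Z_3\to\frac13$ and $Z_1-Z_2\sim s_2(k+2l)e^{2\eta/3}>0$ for $i=k$. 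The half-spaces $\{Z_1\le Z_2\},\{Z_1\le Z_3\}$ (resp.\ $\{Z_1\ge Z_2\},\{Z_1\ge Z_3\}$) are invariant inside $\mathcal{C}^+_{\spin(7)}$ (resp.\ $\mathcal{C}^-_{\spin(7)}$) by \eqref{eqn: Z1<Z2}--\eqref{eqn: Z1<Z3} (resp.\ \eqref{eqn: Z1>Z2}--\eqref{eqn: Z1>Z3}), so these orderings persist along all of $\gamma^i_\theta$. Then fix a finite $\eta_1$; by the first step $\gamma^i_0(\eta_1)$ lies in the open set $U$ where $\left(k\sqrt{Z_1Z_2}+l\sqrt{Z_1Z_3}+(k+l)\sqrt{Z_2Z_3}\right)Z_4<2\Delta$. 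The curves $\gamma^i_\theta$ are furnished by the local existence theorem behind \eqref{eqn_linearized solution} as a family depending continuously on $\theta$ on an initial half-line $(-\infty,\eta_0]$, and continuous dependence on initial data propagates this to $\eta=\eta_1$, so there is a $\theta_*>0$ with $\gamma^i_\theta(\eta_1)\in U$ for all $\theta\in[0,\theta_*)$. Combining with the ordering conclusion yields $\gamma^i_\theta(\eta_1)\in\mathcal{D}^\pm$, i.e.\ $\gamma^i_\theta$ enters $\mathcal{D}^\pm$ at finite time (and stays there afterwards by Lemmas \ref{lem: invariant+}--\ref{lem: invariant-}).

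The step I expect to be the main obstacle is the continuity argument in the last paragraph: because each $\gamma^i_\theta$ originates at the critical point $P_0^i$ rather than at a regular point, one must upgrade continuity in $\theta$ on compact $\eta$-intervals to uniform control on a half-line $(-\infty,\eta_1]$, so that the value $\gamma^i_\theta(\eta_1)$ is actually close to $\gamma^i_0(\eta_1)$; this is precisely the continuous-dependence content of the unstable version of \cite[Theorem 4.5]{coddington_theory_1955} used to set up \eqref{eqn_linearized solution}. A secondary technical point is the non-transversality analysis for the ordering half-spaces along the degenerate loci $\{Z_j=0\}$ and $\{Z_4=0\}$, but this is already contained in the proofs of Lemmas \ref{lem: invariant+}--\ref{lem: invariant-}.
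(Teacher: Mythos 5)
Your proof is correct and follows essentially the same route as the paper: show that the boundary curve $\gamma_0^i\subset\mathcal{W}_i$ satisfies the strict version of \eqref{eqn_new inequality} away from $P_0^i$, note that the two ordering conditions of $\mathcal{D}^\pm$ hold initially and persist by \eqref{eqn: Z1<Z2}--\eqref{eqn: Z1<Z3} and \eqref{eqn: Z1>Z2}--\eqref{eqn: Z1>Z3}, and conclude by continuity of $\theta\mapsto\gamma^i_\theta(\eta_1)$ at a fixed finite $\eta_1$, after which invariance of $\mathcal{D}^\pm$ takes over. The only difference is that you make explicit the scalar ODE $Z_2'=2Z_2(3Z_2-1)(4Z_2-1)$ on $\mathcal{W}_{k+l}$ and the leading eigenvector contributions to $Z_1-Z_3$ and $Z_1-Z_2$, which the paper compresses into ``it is clear.''
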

\begin{proof}
Since $Z_1=0$ and $Z_2=Z_3=\frac{1}{3}$ at $P_0^{k+l}$, the first two defining inequalities in \eqref{eqn_D+} are automatically satisfied by $\gamma_\theta^{k+l}$ for any $\theta\in [0,\pi]$. Recall that $Z_2=0$ and $Z_1=Z_3=\frac{1}{3}$ at $P_0^l$. By \eqref{eqn_linearized solution}, it is clear that $Z_1\geq Z_3$ initially along $\gamma_\theta^{l}$. Therefore, the first two defining inequalities in \eqref{eqn_D-} are satisfied by $\gamma_\theta^{l}$ for any $\theta\in [0,\pi]$. By a similar argument, the same conclusion holds for $\gamma_\theta^{k}$ for any $\theta\in [0,\pi]$. By \eqref{eqn: Z1<Z2}-\eqref{eqn: Z1<Z3} and \eqref{eqn: Z1>Z2}-\eqref{eqn: Z1>Z3}, it is clear that these inequalities are preserved even without the the third inequality in $\mathcal{D}^\pm$. Therefore, it boils down to investigating the function $\left(k\sqrt{Z_1Z_2}+l\sqrt{Z_1Z_3}+(k+l)\sqrt{Z_2Z_3}\right)Z_4$.

Fix an $\eta_\bullet\in\mathbb{R}$ and let $p^i_\bullet(\theta)=\gamma_{\theta}^{i}(\eta_\bullet)$. Recall \eqref{eqn_gamma_0^i}, each $\gamma_0^i$ is an algebraic curve that is contained in the invariant set $\mathcal{W}_i$. Take $i=k+l$ for example, we have 
\begin{equation}
\begin{split}
&\left(\left(k\sqrt{Z_1Z_2}+l\sqrt{Z_1Z_3}+(k+l)\sqrt{Z_2Z_3}\right)Z_4\right)(p^{k+l}_\bullet(0))\\
&=((k+l)Z_2Z_4)(p^{k+l}_\bullet(0))\\
&=2\Delta\left(4-\frac{1}{(Z_2)(p^{k+l}_\bullet(0))}\right)\\
&<2\Delta.
\end{split}
\end{equation}
Since the composite function is continuous, there exists a sufficiently small $\theta_*>0$ such that 
$$\left(\left(k\sqrt{Z_1Z_2}+l\sqrt{Z_1Z_3}+(k+l)\sqrt{Z_2Z_3}\right)Z_4\right)(p^{k+l}_\bullet(\theta))<2\Delta$$
if $\theta\in [0,\theta_*)$.
Therefore, the point $p^{k+l}_\bullet(\theta)$ is contained in the interior of $\mathcal{D}^+$ if $\theta\in (0,\theta_*)$, meaning that the integral curve $\gamma_\theta^{k+l}$ enters $\mathcal{D}^+$ eventually. 

The arguments for the other two cases $i\in \{k,l\}$ are similar. The proof is complete.
\end{proof}

Combining the results above, each $\gamma_\theta^{i}$ with $\theta\in(0,\theta_*)$ is defined on $\mathbb{R}$. Thus we obtain a continuous one-parameter family of forward complete $\spin(7)$ metrics on each $M_{k,l}^{i}$. We show below that these metrics are ALC.

\begin{proposition}
\label{prop_AC not in D}
Critical points $P_{AC}^\pm$ are not in the closed set $\mathcal{D}^+\cup \mathcal{D}^-$.
\end{proposition}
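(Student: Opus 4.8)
Since $P_{AC}^{+}\in\mathcal{C}^{+}_{\spin(7)}$ and $P_{AC}^{-}\in\mathcal{C}^{-}_{\spin(7)}$, and comparing $\mathcal{S}_1^{+}$ with $\mathcal{S}_1^{-}$ forces $\mathcal{C}^{+}_{\spin(7)}\cap\mathcal{C}^{-}_{\spin(7)}\subset\{Z_2Z_3Z_4=0\}$ while every $Z_j$ is positive at a Type~V point, one has $P_{AC}^{+}\notin\mathcal{C}^{-}_{\spin(7)}\supset\mathcal{D}^{-}$ and symmetrically $P_{AC}^{-}\notin\mathcal{D}^{+}$; so it remains to show $P_{AC}^{+}\notin\mathcal{D}^{+}$ and $P_{AC}^{-}\notin\mathcal{D}^{-}$. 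At a Type~V point $X_1=X_2=X_3=X_4=\tfrac17$, hence $G=\tfrac17$ and \eqref{eqn: conservation Z spin(7) 2} gives $Z_1+Z_2+Z_3=\tfrac47$, while $Z_4>0$ since $X_4\ne0$. By the proof of Proposition~\ref{prop: spin7_cone_chirality}, $P_{AC}^{+}$ has $Z_2,Z_3>2Z_1$ and $P_{AC}^{-}$ has $Z_2,Z_3<Z_1$, so the two ordering inequalities in \eqref{eqn_D+}, \eqref{eqn_D-} already hold at $P_{AC}^{\pm}$; the claim therefore reduces to showing that $\bigl(k\sqrt{Z_1Z_2}+l\sqrt{Z_1Z_3}+(k+l)\sqrt{Z_2Z_3}\bigr)Z_4>2\Delta$ at $P_{AC}^{\pm}$.

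The plan is to rewrite this in the ratios $\alpha=Z_2/Z_1$, $\beta=Z_3/Z_1$. Solving $\mathcal{S}_3^{\pm},\mathcal{S}_2^{\pm},\mathcal{S}_1^{\pm}$ for $\tfrac{k}{2\Delta}Z_1Z_2Z_4$, $\tfrac{l}{2\Delta}Z_1Z_3Z_4$, $\tfrac{k+l}{2\Delta}Z_2Z_3Z_4$ using $X_j=\tfrac17=\tfrac14(Z_1+Z_2+Z_3)$, then dividing by $\sqrt{Z_1Z_2}$, $\sqrt{Z_1Z_3}$, $\sqrt{Z_2Z_3}$ and adding, one gets at $P_{AC}^{+}$
\[
\frac{1}{2\Delta}\bigl(k\sqrt{Z_1Z_2}+l\sqrt{Z_1Z_3}+(k+l)\sqrt{Z_2Z_3}\bigr)Z_4
=\frac{5\beta-3\alpha-3}{4\sqrt\alpha}+\frac{5\alpha-3\beta-3}{4\sqrt\beta}+\frac{3\alpha+3\beta-5}{4\sqrt{\alpha\beta}},
\]
and at $P_{AC}^{-}$ the same expression with the three signed numerators reversed. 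Setting $u=\sqrt\alpha$, $v=\sqrt\beta$, $\sigma=u+v$, $\pi=uv$ and clearing denominators, the inequality ``$>1$'' becomes
\[
(\sigma-1)(5\sigma^2+8\sigma+5)>\pi(18\sigma+10)\ \text{ on }\mathcal{C}^{+}_{\spin(7)},\qquad
\pi(18\sigma+2)>(\sigma-1)(5\sigma^2+8\sigma+5)\ \text{ on }\mathcal{C}^{-}_{\spin(7)}.
\]

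For $P_{AC}^{+}$ the condition $\alpha,\beta>2$ gives $\sigma>2\sqrt2$, and $\pi\le\sigma^2/4$ always, so it is enough that $4(\sigma-1)(5\sigma^2+8\sigma+5)>\sigma^2(18\sigma+10)$, i.e. $\sigma^3+\sigma^2-6\sigma-10>0$; this polynomial is increasing for $\sigma\ge2\sqrt2$ and equals $4\sqrt2-2>0$ at $\sigma=2\sqrt2$, so $P_{AC}^{+}\notin\mathcal{D}^{+}$. The case $P_{AC}^{-}$ is the genuinely delicate one, because $\pi$ must now be bounded \emph{below} and the inequality in fact degenerates to equality at the endpoints of the relevant elliptical arc. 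Here I would use that $P_{AC}^{-}$ also lies on the curve $L_1=0$ of Proposition~\ref{prop: spin7_cone_chirality}, which in the $(\sigma,\pi)$ variables reads $2\pi=5\sigma^2-3-2\sqrt{(5\sigma^2-1)(\sigma^2-1)}$, and that $\{Z_2,Z_3<Z_1\}$ together with $L_1=0$ confines $\sigma$ to $[\sqrt2,\,1+\sqrt{2/5})$, the upper bound being strict because $L_2\ne0$ at the arc's endpoints. Substituting, isolating the radical and squaring (legitimate since $20\sigma^3+\sigma^2-12\sigma+1>0$ on that interval), the inequality reduces to $\Phi(\sigma)>0$, where
\[
\Phi(\sigma)=\bigl(20\sigma^3+\sigma^2-12\sigma+1\bigr)^2-(9\sigma+1)^2(5\sigma^2-1)(\sigma^2-1)
=-\sigma\,(5\sigma^2-10\sigma+3)\,(\sigma^3+12\sigma^2+23\sigma+14).
\]
Since $\sigma(\sigma^3+12\sigma^2+23\sigma+14)>0$ and $5\sigma^2-10\sigma+3<0$ precisely for $\sigma\in(1-\sqrt{2/5},\,1+\sqrt{2/5})$, an interval containing $[\sqrt2,\,1+\sqrt{2/5})$, we obtain $\Phi(\sigma)>0$ and hence $P_{AC}^{-}\notin\mathcal{D}^{-}$. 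The main obstacle is this last case: extracting the exact relation $\pi=\pi(\sigma)$ along the arc, pinning down the $\sigma$-range from $Z_2,Z_3<Z_1$, and recognizing the clean factorization of the sextic $\Phi$ that makes the sign decidable.
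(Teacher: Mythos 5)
Your proof is correct, but it follows a genuinely different route from the paper's, and the comparison is instructive. The paper's proof is short and structural: it first extracts from the circle
\[
\left\{Z_1+Z_2+Z_3=\tfrac{4}{7}\right\}\cap\{5(Z_1^2+Z_2^2+Z_3^2)=6(Z_1Z_2+Z_2Z_3+Z_3Z_1)\}
\]
the inequality $\sqrt{Z_mZ_n}\le\tfrac17$ for any pair $m\ne n$ (via completing a square), then solves $X_4=\tfrac17$ directly for $Z_4$ and bounds the target quantity by a two-line ratio estimate: at $P_{AC}^+$, replacing the denominator $(k+l)Z_2Z_3-lZ_1Z_3-kZ_1Z_2<(k+l)Z_2Z_3$ and the numerator $(k+l)\sqrt{Z_2Z_3}+\dots>(k+l)\sqrt{Z_2Z_3}$ gives $>\tfrac{2\Delta}{7\sqrt{Z_2Z_3}}\ge 2\Delta$, and at $P_{AC}^-$ a similarly elementary weighted-mean inequality using $\sqrt{Z_1Z_2},\sqrt{Z_1Z_3}\le\tfrac17$ closes the argument. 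Your approach instead fully uniformizes in $\alpha=Z_2/Z_1$, $\beta=Z_3/Z_1$, exploits the nice cancellation of $(k,l)$ from each term $\frac{k}{2\Delta}\sqrt{Z_1Z_2}\,Z_4$ etc.\ at a Type~V point, and reduces to explicit symmetric-function inequalities in $\sigma=\sqrt\alpha+\sqrt\beta$, $\pi=\sqrt{\alpha\beta}$. For $P_{AC}^+$ your AM--GM bound $\pi\le\sigma^2/4$ together with $\sigma>2\sqrt2$ works cleanly, but for $P_{AC}^-$ you must invoke the exact relation $2\pi=5\sigma^2-3-2\sqrt{(5\sigma^2-1)(\sigma^2-1)}$ coming from the ellipse, pin down $\sigma\in[\sqrt2,\,1+\sqrt{2/5})$, and verify the sextic factorization $\Phi(\sigma)=-\sigma(5\sigma^2-10\sigma+3)(\sigma^3+12\sigma^2+23\sigma+14)$. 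I checked each of these steps: the signed numerators, the $\sigma$-range on the small arc, the choice of root for $\pi(\sigma)$, the legitimacy of squaring, and the sextic factorization all check out, so the argument is sound. The trade-off is clear: your route makes the degeneracy at the arc's endpoints explicit (the quadratic factor $5\sigma^2-10\sigma+3$ vanishes exactly there), which is a nice structural insight, but it costs you a fairly delicate polynomial computation where the paper gets by with elementary bounds derived from $\sqrt{Z_mZ_n}\le\tfrac17$.
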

\begin{proof}
We first show that for any distinct $m,n\in \{1,2,3\}$, the inequality $\sqrt{Z_mZ_n}\geq \frac{1}{7}$ holds at $P_{AC}^\pm$. 

In the proof of Proposition \ref{prop: spin7_cone_chirality}, it is known that $P_{AC}^\pm$ are on the circle 
\begin{equation}
\label{eqn_circl for P_AC}
\left\{Z_1+Z_2+Z_3=\frac{4}{7}\right\}\cap \{5(Z_1^2+Z_2^2+Z_3^2)=6(Z_2Z_3+Z_3Z_1+Z_1Z_2)\}.
\end{equation}
By symmetry, it suffices to prove that $\sqrt{Z_2Z_3}\geq \frac{1}{7}$. Eliminating $Z_1$ from \eqref{eqn_circl for P_AC}, we have 
\begin{equation}
\begin{split}
0&= 16\left(Z_2+Z_3-\frac{2}{7}\right)^2+\frac{16}{49}-16Z_2Z_3\geq \frac{16}{49}-16Z_2Z_3.
\end{split}
\end{equation}
Hence $\sqrt{Z_2Z_3}\geq \frac{1}{7}$. We then have
\begin{equation}
\label{eqn_tech ineq1}
\begin{split}
(\sqrt{Z_2Z_3}-\sqrt{Z_1Z_2})^2&=Z_2(\sqrt{Z_3}-\sqrt{Z_1})^2\\
&=Z_2(Z_1+Z_3-2\sqrt{Z_1Z_3})\\
&= Z_2\left(\frac{4}{7}-Z_2-2\sqrt{Z_1Z_3}\right)\\
&\leq  Z_2\left(\frac{2}{7}-Z_2\right)\\
&\leq \frac{1}{49}.
\end{split}
\end{equation}
Similarly,
\begin{equation}
\begin{split}
\label{eqn_tech ineq2}
(\sqrt{Z_2Z_3}-\sqrt{Z_1Z_3})^2\leq \frac{1}{49}.
\end{split}
\end{equation}

For $P_{AC}^+$, we have
$
\frac{1}{7}=\frac{1}{2\Delta}((k+l)Z_2Z_3-lZ_1Z_3-kZ_1Z_2)Z_4
$
by \eqref{eqn: spin(7)}. Therefore, at $P_{AC}^+$ we have
\begin{equation}
\begin{split}
&((k+l)\sqrt{Z_2Z_3}+l\sqrt{Z_1Z_3}+k\sqrt{Z_1Z_2})Z_4\\
&=\frac{2\Delta}{7}\frac{(k+l)\sqrt{Z_2Z_3}+l\sqrt{Z_1Z_3}+k\sqrt{Z_1Z_2}}{(k+l)Z_2Z_3-lZ_1Z_3-kZ_1Z_2} \\
&=\frac{2\Delta}{7}\frac{k(\sqrt{Z_2Z_3}+\sqrt{Z_1Z_2})+l(\sqrt{Z_2Z_3}+\sqrt{Z_1Z_3})}{k(\sqrt{Z_2Z_3}-\sqrt{Z_1Z_2})(\sqrt{Z_2Z_3}+\sqrt{Z_1Z_2})+l(\sqrt{Z_2Z_3}-\sqrt{Z_1Z_3})(\sqrt{Z_2Z_3}+\sqrt{Z_1Z_3})}.
\end{split}
\end{equation}
By Proposition \ref{prop: spin7_cone_chirality}, we have  $Z_2,Z_3>Z_1$ at $P_{AC}^+$. Inequalities \eqref{eqn_tech ineq1} and \eqref{eqn_tech ineq2} becomes
$$
\sqrt{Z_2Z_3}-\sqrt{Z_1Z_2}\leq \frac{1}{7},\quad \sqrt{Z_2Z_3}-\sqrt{Z_1Z_3}\leq \frac{1}{7}.
$$
Therefore, the above quantity is hence strictly bounded below by $2\Delta$. Hence $P_{AC}^+ \notin \mathcal{D}^+ \cup \mathcal{D}^-$.

For $P_{AC}^-$, we have 
$
\frac{1}{7}=\frac{1}{2\Delta}(-(k+l)Z_2Z_3+lZ_1Z_3+kZ_1Z_2)Z_4
$
by \eqref{eqn: spin(7)}. Therefore,
\begin{equation}
\begin{split}
&((k+l)\sqrt{Z_2Z_3}+l\sqrt{Z_1Z_3}+k\sqrt{Z_1Z_2})Z_4\\
&=\frac{2\Delta}{7}\frac{(k+l)\sqrt{Z_2Z_3}+l\sqrt{Z_1Z_3}+k\sqrt{Z_1Z_2}}{-(k+l)Z_2Z_3+lZ_1Z_3+kZ_1Z_2} \\
&=\frac{2\Delta}{7}\frac{k(\sqrt{Z_2Z_3}+\sqrt{Z_1Z_2})+l(\sqrt{Z_2Z_3}+\sqrt{Z_1Z_3})}{k(\sqrt{Z_1Z_2}-\sqrt{Z_2Z_3})(\sqrt{Z_2Z_3}+\sqrt{Z_1Z_2})+l(\sqrt{Z_1Z_3}-\sqrt{Z_2Z_3})(\sqrt{Z_2Z_3}+\sqrt{Z_1Z_3})}.
\end{split}
\end{equation}
at $P_{AC}^-$. By Proposition \ref{prop: spin7_cone_chirality}, we have  $Z_2,Z_3<Z_1$ at $P_{AC}^-$. Inequalities \eqref{eqn_tech ineq1} and \eqref{eqn_tech ineq2} become
$$
\sqrt{Z_2Z_3}-\sqrt{Z_1Z_2}\geq -\frac{1}{7},\quad \sqrt{Z_2Z_3}-\sqrt{Z_1Z_3}\geq -\frac{1}{7}.
$$
The above quantity is hence strictly bounded below by $2\Delta$. Thus $P_{AC}^- \notin \mathcal{D}^+ \cup \mathcal{D}^-$.
\end{proof}

\begin{lemma}
\label{lem_ALC asymp}
If a $\gamma_\theta^i$ with $\theta\in (0,\pi)$ enters $\mathcal{D}^\pm$, the integral curve converges to $P_{ALC}$.
\end{lemma}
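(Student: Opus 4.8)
The plan is to show that once $\gamma_\theta^i$ enters $\mathcal{D}^\pm$, it is trapped in a compact set (this we already have from Proposition \ref{prop_compact subset}), and then to identify $P_{ALC}$ as the only possible $\omega$-limit point. The first ingredient is a monotone quantity. The natural candidate is $G$ itself: along the flow on $\mathcal{C}_{\spin(7)}^\pm$ one computes $G' = 2(G-1)\big(2X_1^2+2X_2^2+2X_3^2+X_4^2\big) + \text{(terms from the }R_j\text{)}$, but it is cleaner to use the quantity $\frac{Z_4}{Z_1^2Z_2^2Z_3^2}$ already introduced in \eqref{eqn_volume_derivative}, whose derivative is $\frac{Z_4}{Z_1^2Z_2^2Z_3^2}(1-7G)\le 0$, with equality exactly when $G=\tfrac17$, i.e. when $X_1=X_2=X_3=X_4=\tfrac17$. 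By the LaSalle invariance principle, the $\omega$-limit set of $\gamma_\theta^i$ is a connected invariant subset of $\{G=\tfrac17\}\cap\overline{\mathcal{D}^\pm}$. On $\{G=\tfrac17\}$ all four $X_j$ equal $\tfrac17$, so the $\omega$-limit set consists of critical points of this form — but the only such critical points on $\mathcal{C}_{\spin(7)}^\pm$ are the Type V points $P_{AC}^\pm$ together with $P_{ALC}$ (note $X_4=\tfrac17\ne 0$ would force $P_{AC}^\pm$, whereas $P_{ALC}$ has all $X_j=\tfrac16$, so I must be more careful — see below).

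The subtlety is that $P_{ALC}$ has $G = 6\cdot\frac{1}{36} = \frac16 \ne \frac17$, so the argument above is not literally correct: along $\gamma_\theta^i$ the quantity $\frac{Z_4}{Z_1^2Z_2^2Z_3^2}$ need not tend to a positive limit, and in fact if the curve converges to $P_{ALC}$ then $Z_4\to 0$ while $Z_1Z_2Z_3\to\frac{1}{216}>0$, so $\frac{Z_4}{Z_1^2Z_2^2Z_3^2}\to 0$. Thus the correct use of \eqref{eqn_volume_derivative} is: the monotone quantity decreases to some limit $\mu_\infty\ge 0$; on the $\omega$-limit set it is constant, so $(1-7G)\equiv 0$ there unless the limiting value of $\frac{Z_4}{Z_1^2Z_2^2Z_3^2}$ is zero, i.e. unless $Z_4\to 0$ along the $\omega$-limit set. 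So the $\omega$-limit set is a connected invariant subset of $\big(\{G=\tfrac17\}\cup\{Z_4=0\}\big)\cap\overline{\mathcal{D}^\pm}$. The piece $\{G=\tfrac17\}$ contains only the critical points $P_{AC}^\pm$, which by Proposition \ref{prop_AC not in D} are not in $\overline{\mathcal{D}^\pm}$; hence the $\omega$-limit set lies in $\{Z_4=0\}\cap\overline{\mathcal{D}^\pm}$. On $\{Z_4=0\}$ the $\spin(7)$ system \eqref{eqn: spin(7)} reduces to $X_j = Z_m+Z_n-Z_j$ and $X_4=0$, i.e. the curve lies in (the closure of) $\mathcal{C}_{G_2}$, and the subsystem there is the cohomogeneity one $G_2$ system on $SU(3)/T^2\to\mathbb{CP}^2$.

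It therefore remains to analyze the $\omega$-limit set of the restricted $G_2$ flow inside $\overline{\mathcal{D}^\pm}\cap\{Z_4=0\}$. Here the critical points are exactly those listed in Types II, III, IV of the critical point list: $P_{ALC}$, the three Type III sources, and the three Type IV saddles $Q_0^i$. The Type III points have a negative $X_j$-coordinate, hence violate $2(Z_1+Z_2+Z_3)-X_4=1$ together with $Z_j\ge 0$ — more directly, they have some $Z_m=Z_n=0$, which combined with $Z_1\le Z_2,Z_3$ (or $\ge$) forces them out of $\mathcal{D}^\pm$ unless all $Z_j$ vanish, impossible on the conservation set. The Type IV points $Q_0^i$ are saddles within $\mathcal{C}_{G_2}$; I would rule them out as $\omega$-limits of curves genuinely entering the \emph{interior} of $\mathcal{D}^\pm$ by checking that the only integral curves limiting to $Q_0^i$ within $\overline{\mathcal{D}^\pm}$ are the boundary curves $\mathcal{W}_i$ themselves (on which $Z_4\not\equiv 0$), i.e. that the stable manifold of $Q_0^i$ meets $\mathcal{D}^\pm$ only in its boundary — this is where one uses that a $\gamma_\theta^i$ with $\theta\in(0,\pi)$ has all $Z_j>0$ and has already strayed off $\mathcal{W}_i$. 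That leaves $P_{ALC}$ as the unique possibility, and since the $\omega$-limit set is nonempty (the curve is precompact by Proposition \ref{prop_compact subset}), connected, and consists of critical points, it must equal $\{P_{ALC}\}$, which gives convergence. The main obstacle I expect is the last step — cleanly excluding $Q_0^i$ as an $\omega$-limit — which will likely require a short local analysis of the $G_2$ subsystem near $Q_0^i$ (computing its stable/unstable directions within $\mathcal{C}_{G_2}$ and checking the sign of a separating function such as $Z_1-Z_2$ or the $\mathcal{D}^\pm$-defining inequality) rather than any single slick estimate.
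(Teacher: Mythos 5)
Your Case 1 (when $\frac{Z_4}{Z_1^2Z_2^2Z_3^2}$ has a positive limit, forcing $G\to\frac17$ and excluding $P_{AC}^\pm$ via Proposition \ref{prop_AC not in D}) matches the paper's argument exactly, including the identification of the correct monotone quantity. The divergence is in Case 2 ($Z_4\to 0$). The paper does not try to classify critical points on $\mathcal{C}_{G_2}\cap\overline{\mathcal{D}^\pm}$ and exclude $Q_0^i$ by a local saddle analysis. Instead it rewrites the Cleyton--Swann $G_2$ equations in the $(X_j,Z_j)$ coordinates to show that every trajectory on $\mathcal{C}_{G_2}$ lies on one of the algebraic curves $\mathcal{C}_{G_2}\cap\{\cos(\xi)Z_3(Z_2-Z_1)-\sin(\xi)Z_2(Z_3-Z_1)=0\}$, and that each non-constant piece of such a curve converges to $P_{ALC}$. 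Since the $\omega$-limit set is a compact connected invariant subset of $\mathcal{C}_{G_2}$ and hence a union of such trajectories and critical points, it must contain $P_{ALC}$; because $P_{ALC}$ is a sink in the full system, convergence follows.

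This points to the genuine gap in your plan: you assert that the $\omega$-limit set ``consists of critical points'' without justification. That is not automatic. A compact connected invariant subset of a two-dimensional flow can contain whole non-constant trajectories and heteroclinic chains, and without some global information about the restricted $G_2$ flow (a Lyapunov function, or the explicit orbit structure the paper uses) there is no way to reduce the $\omega$-limit set to a single critical point. Your acknowledged difficulty (ruling out $Q_0^i$) sits downstream of this unaddressed step.

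A smaller issue: your membership argument excluding the Type III points from $\mathcal{D}^\pm$ is not correct. At a Type III point two of $Z_1,Z_2,Z_3$ vanish and $Z_4=0$, so the product inequality in the definition of $\mathcal{D}^\pm$ is trivially satisfied, and one of $Z_1\le Z_2,Z_3$ or $Z_1\ge Z_2,Z_3$ holds with equality. Thus at least one Type III point lies on the boundary of $\mathcal{D}^+$ and another on the boundary of $\mathcal{D}^-$, and they cannot be removed by the invariant-set membership check; one must instead use the flow structure of the $G_2$ subsystem (they are sources there), which is again the route the paper takes.
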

\begin{proof}
Recall that for $\theta\in (0,\pi)$, the quantity $\frac{Z_4}{Z_1^2Z_2^2Z_3^2}$ is non-increasing along $\gamma_\theta^i$ by \eqref{eqn_volume_derivative}. Since the integral curve is confined in a compact subset and is defined on $\mathbb{R}$ by Proposition \ref{prop_compact subset}, the limit $\lim\limits_{\eta\to\infty}\frac{Z_4}{Z_1^2Z_2^2Z_3^2}$ exists.

If $\lim\limits_{\eta\to\infty}\frac{Z_4}{Z_1^2Z_2^2Z_3^2}\neq 0$, the $\omega$-limit set is a subset of $\{G=\frac{1}{7}\}$ by \eqref{eqn_volume_derivative}. Cauchy--Schwarz inequality implies that the $\omega$-limit set is contained in $\{X_1=X_2=X_3=X_4=\frac{1}{7}\}$. Since the $\omega$-limit set is invariant, each $R_i$ in \eqref{eqn: new Einstein equation} is equal by the ODE system. With the conservation law \eqref{eqn: new conservation set}, it is known that each $Z_i$ must be constant in the $\omega$-limit set. Hence the set is contained in the finite set $\{P_{AC}^+,P_{AC}^-\}$, meaning the integral curve converges to one of the critical points. Since $P_{AC}^\pm$ are not in $\mathcal{D}^+\cup\mathcal{D}^-$ and each $\mathcal{D}^\pm$ is invariant, this leads to a contradiction.

If $\lim\limits_{\eta\to\infty}\frac{Z_4}{Z_1^2Z_2^2Z_3^2}=0$, the $\omega$-limit set is contained in $\{Z_4=0\}$, since the other $Z_j$ are bounded above. By \eqref{eqn: spin(7)}, the $\omega$-limit set is contained in $\mathcal{C}_{G_2}$. 

The dynamical subsystem restricted on $\mathcal{C}_{G_2}$ is fully known by \cite{cleyton_cohomogeneity-one_2002}. In particular, the invariant set can be parametrized by the regular triangle 
$$
\{Z_1,Z_2,Z_3\geq 0\}\cap \left\{Z_1+Z_2+Z_3=\frac{1}{2}\right\}.
$$
Each point on $\mathcal{C}_{G_2}$ belongs to the invariant algebraic curve
$$
\mathcal{L}(\xi)=\mathcal{C}_{G_2}\cap \{\cos(\xi)Z_3(Z_2 - Z_1) - \sin(\xi) Z_2(Z_3 -Z_1) =0\}
$$
for some $\quad \xi \in [0,\pi).$ If $\xi\in \left\{0,\frac{\pi}{4},\frac{\pi}{2}\right\}$, the algebraic curve is a straight line that contains $Q_0^i$, $P_{ALC}$, and $Q_1^i$. The segment that joins  $Q_0^i$ and $P_{ALC}$ is the Bryant--Salamon $G_2$ metric. For the other value of $\xi$, the algebraic curve $\mathcal{L}(\xi)$ contains $P_{ALC}$ and two of $Q_1^i$, representing a pair of singular $G_2$ metrics with an ALC limit.

The critical point $P_{ALC}\in \mathcal{C}_{G_2}$ is a sink (see \cite[(3.5)]{chi2022spin}). Furthermore, the dynamical subsystem restricted on $\mathcal{C}_{G_2}$ is fully known by \cite{cleyton_cohomogeneity-one_2002}. As discussed in the Remark \ref{rem_G2} below, each integral curve on $\mathcal{C}_{G_2}$ converges to $P_{ALC}$. Thus, the integral curve with its $\omega$-limit set in $\mathcal{C}_{G_2}$ converges to $P_{ALC}$. 
\end{proof}

\begin{remark}
\label{rem_G2}
The invariant set $\mathcal{C}_{G_2}$ can be parametrized by the regular triangle 
$$
\{Z_1,Z_2,Z_3\geq 0\}\cap \left\{Z_1+Z_2+Z_3=\frac{1}{2}\right\}.
$$
Each point on $\mathcal{C}_{G_2}$ belongs to an invariant algebraic curve
$$
\mathcal{L}(\xi)=\mathcal{C}_{G_2}\cap \{\cos(\xi)Z_3(Z_2 - Z_1) - \sin(\xi) Z_2(Z_3 -Z_1) =0\}
$$
for some $\xi \in [0,\pi).$ If $\xi\in \left\{0,\frac{\pi}{4},\frac{\pi}{2}\right\}$, the algebraic curve is a straight line that contains $Q_0^i$, $P_{ALC}$, and $Q_1^i$. The segment that joins  $Q_0^i$ and $P_{ALC}$ is the Bryant--Salamon $G_2$ metric. For the other value of $\xi$, the algebraic curve $\mathcal{L}(\xi)$ contains $P_{ALC}$ and two of $Q_1^i$, representing a pair of singular $G_2$ metrics with an ALC limit.
\end{remark}

\section{Invariant set where the circle fiber blows up}
\label{sec: invariant set 2}
In this section, we construct invariant sets where integral curves do not converge to any of the critical points of the $\spin(7)$ system. Geometrically, integral curves entering these sets correspond to metrics whose principal curvature of the $\mathbb{S}^1$-fiber in $N_{k,l}$ dominates the remaining principal curvatures beyond a fixed threshold. Once this threshold is exceeded, the $\mathbb{S}^1$-fiber expands too rapidly for the metric to exhibit either ALC or AC asymptotics. 

\begin{lemma}
\label{lem_B is bad}
Define 
$$\mathcal{B}^\pm:=\mathcal{C}^\pm_{\spin(7)}\cap \left\{X_4-X_1-X_2-X_3\geq 0\right\}.$$
The sets $\mathcal{B}^\pm$ are invariant.
\end{lemma}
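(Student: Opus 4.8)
The strategy is to reduce the invariance of $\mathcal{B}^\pm$ to a single scalar inequality on its boundary. Since $\mathcal{C}^\pm_{\spin(7)}$ is already invariant, it suffices to show that the extra constraint $W:=X_4-X_1-X_2-X_3\geq 0$ is preserved. First I would note that \eqref{eqn_trace=1} gives $X_1+X_2+X_3=\tfrac{1-X_4}{2}$, so $W=\tfrac{3X_4-1}{2}$ on $\mathcal{C}^\pm_{\spin(7)}$; hence $\mathcal{B}^\pm=\mathcal{C}^\pm_{\spin(7)}\cap\{X_4\geq\tfrac13\}$, and the only boundary through which an integral curve could leave is $\{X_4=\tfrac13\}$. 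The problem becomes: show $X_4'\geq 0$ whenever $X_4=\tfrac13$.

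On $\{X_4=\tfrac13\}$ I would combine $X_4'=X_4(G-1)+R_4$ with the conservation law \eqref{eqn: new conservation} to get
\[
\left.X_4'\right|_{X_4=\frac13}=\tfrac13(G-1)+R_4=\tfrac13\bigl(-2(R_1+R_2+R_3)-R_4\bigr)+R_4=\tfrac23\bigl(R_4-R_1-R_2-R_3\bigr).
\]
The $Z_4$-dependent terms of $R_1+R_2+R_3$ add up to $-R_4$ and the rest to $6(Z_1Z_2+Z_2Z_3+Z_3Z_1)-(Z_1^2+Z_2^2+Z_3^2)$, so
\[
R_4-R_1-R_2-R_3=2R_4+(Z_1^2+Z_2^2+Z_3^2)-6(Z_1Z_2+Z_2Z_3+Z_3Z_1).
\]
Since \eqref{eqn: conservation Z spin(7) 2} forces $Z_1+Z_2+Z_3=\tfrac{1+X_4}{2}=\tfrac23$ along this boundary, the claim $X_4'\geq 0$ is equivalent to
\[
R_4+2(Z_1^2+Z_2^2+Z_3^2)\geq\tfrac23=\tfrac32\bigl(Z_1+Z_2+Z_3\bigr)^2.
\]

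To finish I would eliminate $Z_4$: on $\mathcal{C}^\pm_{\spin(7)}$ the $X_4$-equation of \eqref{eqn: spin(7)} together with $X_4=\tfrac13$ expresses $Z_4$ as a rational function of $Z_1,Z_2,Z_3$ whose denominator $\pm\bigl((k+l)Z_2Z_3-lZ_1Z_3-kZ_1Z_2\bigr)$ is positive by Proposition~\ref{prop: X4 not negative}, so $R_4$ becomes rational in $Z_1,Z_2,Z_3$. Clearing denominators and using $Z_1+Z_2+Z_3=\tfrac23$ to homogenize reduces the inequality to the nonnegativity of an explicit polynomial in the ratios $\alpha=Z_2/Z_1$, $\beta=Z_3/Z_1$ over the region cut out by $X_4>0$, with coefficients polynomial in $k,l$ — the same kind of statement proved in Lemmas~\ref{lem: invariant+}--\ref{lem: invariant-} by discriminant and sum-of-squares arguments. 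I expect this step to be the main obstacle: one must organize the weight-dependent coefficients so that positivity is visible and locate the equality set. A direct check indicates that equality holds exactly at $P_0^{k+l}$ for $\mathcal{C}^+_{\spin(7)}$ and at $P_0^{k},P_0^{l}$ for $\mathcal{C}^-_{\spin(7)}$, where $R_4+2\sum_i Z_i^2=\tfrac23$ on the nose; the degenerate strata $\{Z_1=0\}$ (for $\mathcal{C}^+$) and $\{Z_2=0\}$, $\{Z_3=0\}$ (for $\mathcal{C}^-$), on which $\alpha,\beta$ are undefined, should be handled separately — for instance on $\{Z_1=0\}\subset\mathcal{C}^+$ one computes $\left.X_4'\right|_{X_4=\frac13}=\tfrac43(Z_2-Z_3)^2\geq 0$, vanishing only at $P_0^{k+l}$. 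Finally, exactly as in Proposition~\ref{prop: X4 not negative} and Lemmas~\ref{lem: invariant+}--\ref{lem: invariant-}, any non-transversal intersection with $\{X_4=\tfrac13\}$ would be forced onto one of these critical points, which is impossible for a non-constant integral curve; this rules out escape and proves that $\mathcal{B}^\pm$ is invariant.
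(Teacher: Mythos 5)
Your reduction of the problem is correct and is essentially the same as the paper's: you use \eqref{eqn_trace=1} to rewrite $X_4-X_1-X_2-X_3\geq 0$ as $X_4\geq\tfrac13$, the conservation law to turn $X_4'$ on the boundary into $\tfrac23(R_4-R_1-R_2-R_3)$, and \eqref{eqn: conservation Z spin(7) 2} to fix $Z_1+Z_2+Z_3=\tfrac23$ there; your computation that the $Z_4$-terms of $R_1+R_2+R_3$ cancel against $-R_4$, your resulting inequality $R_4+2\sum Z_j^2\geq\tfrac23$, and your check $\left.X_4'\right|_{X_4=\frac13,Z_1=0}=\tfrac43(Z_2-Z_3)^2$ are all correct. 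However, there are two genuine gaps. First, the inequality itself is the entire technical content of the lemma, and you do not prove it — you only note that after clearing $Z_4$ it becomes a polynomial nonnegativity statement in $\alpha,\beta$ with coefficients in $k,l$ and that you ``expect this step to be the main obstacle.'' The paper carries this out by writing the numerator as $\Phi=\Phi_2 l^2+\Phi_1 kl+\Phi_0 k^2$ (it works directly with the $Z_j$ rather than with $\alpha,\beta$), proving $\Phi_0,\Phi_2\geq0$ by a critical-point/Hessian analysis of a degree-four polynomial in two variables, and establishing $\Phi_1^2-4\Phi_0\Phi_2\leq0$ via an explicit factorization \eqref{eqn_tricky discrim}. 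None of this is automatic, and it is exactly the kind of step that can fail; claiming it is ``the same kind of statement proved in Lemmas~\ref{lem: invariant+}--\ref{lem: invariant-}'' is not a substitute for carrying it out.

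Second, and more importantly, your closing claim that ``any non-transversal intersection with $\{X_4=\tfrac13\}$ would be forced onto one of these critical points'' is false. The equality set of the boundary inequality is strictly larger than $\{P_0^{k+l}\}\cup\{P_0^k,P_0^l\}$: when $Z_1Z_2Z_3\neq0$ the discriminant factorization forces $Z_1^2+Z_2^2+Z_3^2=2(Z_1Z_2+Z_2Z_3+Z_3Z_1)$, and together with the vanishing of an additional linear form the paper finds a non-critical point $P_*\in\mathcal{C}^-_{\spin(7)}$ (with $Z_1=\tfrac{(k+l)^2}{3\Delta}$, $Z_2=\tfrac{l^2}{3\Delta}$, $Z_3=\tfrac{k^2}{3\Delta}$) at which the boundary derivative vanishes. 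Ruling out escape through $P_*$ cannot be done by the ``equality only at critical points'' argument you use (and which does suffice for Proposition~\ref{prop: X4 not negative}); the paper needs a separate continuous-dependence argument for the trajectory through $P_*$. So even granting the polynomial inequality, your treatment of the non-transversal case, as written, does not close the proof.
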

\begin{proof}
Consider 
\begin{equation}
\label{eqn_comp B}
\begin{split}
&\left.(X_4-X_1-X_2-X_3)'\right|_{X_4-X_1-X_2-X_3=0}\\
&=(X_4-X_1-X_2-X_3)(G-1)+R_4-R_1-R_2-R_3\\
&=\left(\frac{(k+l)^2}{\Delta^2}Z_2^2Z_3^2+\frac{l^2}{\Delta^2}Z_1^2Z_3^2+\frac{k^2}{\Delta^2}Z_1^2Z_2^2\right)Z_4^2\\
&\quad +Z_1^2+Z_2^2+Z_3^2-6Z_1Z_2-6Z_2Z_3-6Z_1Z_3
\end{split}
\end{equation}

By \eqref{eqn_trace=1} and \eqref{eqn: conservation Z spin(7) 2}, the equation $X_4-X_1-X_2-X_3=0$ is equivalent to $Z_1+Z_2+Z_3=\frac{2}{3}$. By \eqref{eqn: conservation Z spin(7) 1}, we have
$$Z_4^2=\left(\frac{2(Z_1+Z_2+Z_3)-1}{\frac{k+l}{2\Delta}Z_2Z_3-\frac{l}{2\Delta}Z_1Z_3-\frac{k}{2\Delta}Z_1Z_2}\right)^2=\left(\frac{1}{2}\frac{Z_1+Z_2+Z_3}{\frac{k+l}{2\Delta}Z_2Z_3-\frac{l}{2\Delta}Z_1Z_3-\frac{k}{2\Delta}Z_1Z_2}\right)^2.$$ The computation \eqref{eqn_comp B} becomes
\begin{equation}
\label{eqn_x4-x1-x2-x3}
\begin{split}
&\left.(X_4-X_1-X_2-X_3)'\right|_{X_4-X_1-X_2-X_3=0}\\
&=\left(\frac{(k+l)^2}{\Delta^2}Z_2^2Z_3^2+\frac{l^2}{\Delta^2}Z_1^2Z_3^2+\frac{k^2}{\Delta^2}Z_1^2Z_2^2\right)\left(\frac{1}{2}\frac{Z_1+Z_2+Z_3}{\frac{k+l}{2\Delta}Z_2Z_3-\frac{l}{2\Delta}Z_1Z_3-\frac{k}{2\Delta}Z_1Z_2}\right)^2\\
&\quad +Z_1^2+Z_2^2+Z_3^2-6Z_1Z_2-6Z_2Z_3-6Z_1Z_3\\
&=\frac{2\Phi}{\left((k+l)Z_2Z_3-lZ_1Z_3-kZ_1Z_2\right)^2},
\end{split}
\end{equation}
where 
\begin{equation}
\begin{split}
\Phi &=\Phi_2l^2+\Phi_1kl+\Phi_0k^2\\
\Phi_2&=Z_1^{4} Z_3^{2}-3 Z_1^{3} Z_2 Z_3^{2}-2 Z_1^{3} Z_3^{3}+8 Z_1^{2} Z_2^{2} Z_3^{2}+4 Z_1^{2} Z_2 Z_3^{3}+Z_1^{2} Z_3^{4}-3 Z_1 Z_2^{3} Z_3^{2}\\
&\quad +4 Z_1 Z_2^{2} Z_3^{3}-Z_1 Z_2Z_3^{4}+Z_2^{4} Z_3^{2}-2 Z_2^{3} Z_3^{3}+Z_2^{2} Z_3^{4},\\
\Phi_1&=Z_1^{4} Z_2 Z_3-7 Z_1^{3} Z_2^{2} Z_3-7 Z_1^{3} Z_2 Z_3^{2}+7 Z_1^{2} Z_2^{3} Z_3+8 Z_1^{2} Z_2^{2} Z_3^{2}\\
&\quad +7 Z_1^{2} Z_2 Z_3^{3}-Z_1 \,Z_2^{4} Z_3+Z_1 Z_2^{3} Z_3^{2}+Z_1Z_2^{2} Z_3^{3}-Z_1 Z_2 Z_3^{4}+2 Z_2^{4} Z_3^{2}-4 Z_2^{3} Z_3^{3}+2 Z_2^{2} Z_3^{4},\\
\Phi_0&=Z_1^{4} Z_2^{2}-2 Z_1^{3} Z_2^{3}-3 Z_1^{3} Z_2^{2} Z_3+Z_1^{2} Z_2^{4}+4 Z_1^{2} Z_2^{3} Z_3+8 Z_1^{2} Z_2^{2} Z_3^{2}-Z_1 Z_2^{4} Z_3\\
&\quad +4 Z_1 Z_2^{3} Z_3^{2}-3 Z_1 Z_2^{2} Z_3^{3}+Z_2^{4} Z_3^{2}-2 Z_2^{3} Z_3^{3}+Z_2^{2} Z_3^{4}.
\end{split}
\end{equation}

Since
\begin{equation}
\label{eqn_tricky discrim}
\begin{split}
\delta&=\Phi_1^2-4\Phi_0\Phi_2\\
&=-3 Z_1^{2} Z_2^{2} Z_3^{2} (Z_1+Z_2+Z_3)^{2} \left(Z_1^{2}+Z_2^{2}+Z_3^{2}-2 Z_2 Z_3-2 Z_1 Z_3-2 Z_1 Z_2\right)^{2}\\
&\leq 0,
\end{split}
\end{equation}
the polynomial $\Phi$ is non-negative if $\Phi_2$ or $\Phi_0$ is positive.

We proceed to show that $\Phi_2 \geq 0$ and its zero set in $\{X_4-X_1-X_2-X_3=0\}$ (equivalently $\{Z_1+Z_2+Z_3=\frac{2}{3}\}$) is $\{Z_3 = 0\}\cup\{P_0^{k+l},P_0^l\}$. Define $a=\frac{Z_1}{Z_3}$ and $b=\frac{Z_2}{Z_3}$. Rewrite $\Phi_2=Z_3^6\phi_2$, where
\begin{equation}
\phi_2(a,b)=a^4-3a^3b-2a^3+8a^2b^2+4a^2b+a^2-3ab^3+4ab^2-ab+b^4-2b^3+b^2.
\end{equation}

Since
\begin{equation}
\begin{split}
\frac{\partial \phi_2}{\partial a}+\frac{\partial \phi_2}{\partial b}&= a(a-1)^2+b(b-1)^2 + ab(7a + 7b  + 16)\geq 0,\\
\frac{\partial \phi_2}{\partial a}-\frac{\partial \phi_2}{\partial b}&=(a-b)(7a^2 - 18ab + 7b^2 - 10a - 10b + 3),
\end{split}
\end{equation}
the critical points of $\phi_2$ in $[0,\infty)\times [0,\infty)$ are $(0,0)$, $(1,0)$, or $(0,1)$.
The Hessian of $\phi_2$ at these points are
$$
\mathrm{Hess}\phi_2(0,0)=\begin{bmatrix}
2&-1\\
-1&2
\end{bmatrix},\quad \mathrm{Hess}\phi_2(1,0)=\begin{bmatrix}
2&-2\\
-2&26
\end{bmatrix},\quad \mathrm{Hess}\phi_2(0,1)=\begin{bmatrix}
26&-2\\
-2&2
\end{bmatrix}.
$$
Hence, these points are local minima. Since $\phi_2$ vanishes at these point, the function $\phi_2\geq 0$ on $[0,\infty)\times [0,\infty)$ and it only vanishes at $(0,0)$, $(1,0)$, and $(0,1)$. 

By \eqref{eqn_trace=1}, the equation $X_4-X_1-X_2-X_3=0$ is equivalent to $X_4=\frac{1}{3}$. By \eqref{eqn: spin(7)}, at most one of $Z_1$, $Z_2$, $Z_3$ vanishes. Threfore, we can ignore $(0,0)$. Correspondingly, the zero set of $\Phi_2=Z_3^6\phi_2$ zero set in $\{X_4-X_1-X_2-X_3=0\}$ is $\{Z_3 = 0\}\cup \{P_0^{l},P_0^{k+l}\}$. By the symmetry $(l,Z_2)\leftrightarrow (k,Z_3)$, the function $\Phi_0$ is non-negative and its zero set is $\{Z_2 = 0\}\cup \{P_0^{k},P_0^{k+l}\}$. 

By our analysis above, both $\Phi_2$ and $\Phi_0$ vanish on $X_4-X_1-X_2-X_3=0$ only at the critical point $P_0^{k+l}$. As $\Phi_1$ vanishes at $P_0^{k+l}$, we conclude that the polynomial $\Phi$, and hence the derivative \eqref{eqn_x4-x1-x2-x3} is non-negative. 

By \eqref{eqn_tricky discrim}, If a non-transversal intersection occurs in $\{Z_1+Z_2+Z_3=\frac{2}{3}\}$, it is necessary that $\delta=0$ at the intersection point. If $Z_1Z_2Z_3=0$, we obtain one of $P_0^i$ from $\Phi=0$. Otherwise, we have 
$$
Z_1^2+Z_2^2+Z_3^2=2Z_2Z_3+2Z_1Z_3+2Z_1Z_2.
$$
With $Z_1^2+Z_2^2+Z_3^2+2Z_2Z_3+2Z_1Z_3+2Z_1Z_2=(Z_1+Z_2+Z_3)^2=\frac{4}{9}$, it is clear that 
$$Z_1^2+Z_2^2+Z_3^2=2Z_2Z_3+2Z_1Z_3+2Z_1Z_2=\frac{2}{9}.$$
Therefore, at the non-transversal intersection point, the derivative \eqref{eqn_x4-x1-x2-x3} becomes
\begin{equation}
\begin{split}
&\left.(X_4-X_1-X_2-X_3)'\right|_{X_4-X_1-X_2-X_3=0}\\
&=(X_4-X_1-X_2-X_3)(G-1)+R_4-R_1-R_2-R_3\\
&=\left(\frac{(k+l)^2}{\Delta^2}Z_2^2Z_3^2+\frac{l^2}{\Delta^2}Z_1^2Z_3^2+\frac{k^2}{\Delta^2}Z_1^2Z_2^2\right)\left(\frac{1}{2}\frac{Z_1+Z_2+Z_3}{\frac{k+l}{2\Delta}Z_2Z_3-\frac{l}{2\Delta}Z_1Z_3-\frac{k}{2\Delta}Z_1Z_2}\right)^2\\
&\quad +Z_1^2+Z_2^2+Z_3^2-6Z_1Z_2-6Z_2Z_3-6Z_1Z_3\\
&=\left(\frac{(k+l)^2}{\Delta^2}Z_2^2Z_3^2+\frac{l^2}{\Delta^2}Z_1^2Z_3^2+\frac{k^2}{\Delta^2}Z_1^2Z_2^2\right)\left(\frac{1}{3}\frac{1}{\frac{k+l}{2\Delta}Z_2Z_3-\frac{l}{2\Delta}Z_1Z_3-\frac{k}{2\Delta}Z_1Z_2}\right)^2-\frac{4}{9}\\
&=\frac{8}{9}\frac{Z_1Z_2Z_3}{\left((k+l)Z_2Z_3-lZ_1Z_3-kZ_1Z_2\right)^2}\left((k+l)lZ_3+(k+l)kZ_2-lkZ_1\right).
\end{split}
\end{equation}
The non-transversal intersection point in $\{Z_1+Z_2+Z_3=\frac{2}{3}\}$ with $Z_1Z_2Z_3\neq 0$ is hence characterized by the following set of equations
\begin{equation}
\begin{split}
Z_1+Z_2+Z_3&=\frac{2}{3},\\
Z_1^2+Z_2^2+Z_3^2&=2Z_2Z_3+2Z_1Z_3+2Z_1Z_2,\\
(k+l)lZ_3+(k+l)kZ_2-lkZ_1&=0,
\end{split}
\end{equation}
whose only solution is $P_*=(-\frac{kl}{3\Delta},\frac{(k+l)k}{3\Delta},\frac{(k+l)l}{3\Delta},\frac{1}{3},\frac{(k+l)^2}{3\Delta},\frac{l^2}{3\Delta},\frac{k^2}{3\Delta},\frac{6\Delta^2}{kl(k+l)})\in \mathcal{C}^-_{\spin(7)}$. By continuous dependence, the integral curve that starts at $P_*$ also enters the interior of $\mathcal{B}^-$. Therefore, both sets $\mathcal{B}^\pm$ are invariant.
\end{proof}

Recall that each $\gamma_\pi^i$ lies on the algebraic curve $\mathcal{W}_i$ in \eqref{eqn_W curve}, where one of the $Z_j$ vanishes identically while the other two remain greater than $\frac{1}{3}$. Hence, these integral curves lie in $\mathcal{B}^+\cup \mathcal{B}^-$. By \eqref{eqn_linearized solution} and Lemma \ref{lem_B is bad}, each $\gamma_\theta^i$ stays in $\mathcal{B}^+\cup \mathcal{B}^-$ initially whenever $\theta\in\left(\pi-\arctan\left(\frac{1}{3i}\right),\pi\right]$.

For each $\gamma_\theta^i$, define 
$$\theta_i:=\sup \{\theta \in [0,\pi]\mid \gamma_\theta^i\text{ eventually enters }\mathcal{D}^+\cup \mathcal{D}^-\}.$$

\begin{lemma}
\label{lem_AC metrics}
The integral curve $\gamma_{\theta_{k+l}}^{k+l}$ converges to $P_{AC}^+$. The integral curves $\gamma_{\theta_l}^{l}$ and $\gamma_{\theta_k}^{k}$ converge to $P_{AC}^-$.
\end{lemma}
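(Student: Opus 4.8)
The plan is to realize $\gamma_{\theta_i}^{i}$ as a limiting integral curve trapped between the two invariant regions $\mathcal{D}^\pm$ and $\mathcal{B}^\pm$, and then to pin down its $\omega$-limit set using the monotone quantity $V:=\frac{Z_4}{Z_1^2Z_2^2Z_3^2}$. It suffices to treat $i=k+l$, where the relevant sets are $\mathcal{D}^+$ and $\mathcal{B}^+$ and $\gamma_\theta^{k+l}\subset\mathcal{C}^+_{\spin(7)}$; the cases $i\in\{k,l\}$ are verbatim with $\pm$ interchanged and $P_{AC}^+$ replaced by $P_{AC}^-$.

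First I would check $\theta_{k+l}\in(0,\pi)$. The lower bound $\theta_{k+l}\ge\theta_*>0$ is immediate from Lemma~\ref{lem_ALC metrics}. For the upper bound, recall that for $\theta$ close to $\pi$ the curve $\gamma_\theta^{k+l}$ lies in $\mathcal{B}^+$ by \eqref{eqn_linearized solution} and Lemma~\ref{lem_B is bad}, hence satisfies $X_4-X_1-X_2-X_3\ge 0$, i.e.\ $X_4\ge\tfrac13$ by \eqref{eqn_trace=1}, for all $\eta$. If such a curve also entered $\mathcal{D}^+$, then by Proposition~\ref{prop_compact subset} it would be defined on $\mathbb{R}$ inside a compact subset of $\mathcal{D}^+$ and, by Lemma~\ref{lem_ALC asymp}, would converge to $P_{ALC}$; but $X_4=0$ at $P_{ALC}$, contradicting $X_4\ge\tfrac13$ along the forward-invariant curve. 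Hence no $\gamma_\theta^{k+l}$ with $\theta$ near $\pi$ enters $\mathcal{D}^+$, so $\theta_{k+l}$ is bounded away from $\pi$.

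Next I would show that $\gamma_{\theta_{k+l}}^{k+l}$ itself enters neither $\mathcal{D}^+$ nor $\mathrm{int}\,\mathcal{B}^+$. For $\theta_{k+l}\in(0,\pi)$ all $Z_j>0$ and the orderings $Z_1<Z_2$, $Z_1<Z_3$ hold strictly along the curve (as in the proof of Lemma~\ref{lem_ALC metrics}, using forward-invariance of $\{Z_1\le Z_2\}$ and $\{Z_1\le Z_3\}$), so the only way to enter $\mathcal{D}^+$ is through the hypersurface $\{(k\sqrt{Z_1Z_2}+l\sqrt{Z_1Z_3}+(k+l)\sqrt{Z_2Z_3})Z_4=2\Delta\}$, on which the derivative \eqref{eqn: technical computation+ 1-3} is strictly negative once $Z_1>0$; thus entering $\mathcal{D}^+$ forces the curve into the open set $\mathrm{int}\,\mathcal{D}^+$ at a finite $\eta$, and continuous dependence on $\theta$ would then put some $\gamma_\theta^{k+l}$ with $\theta>\theta_{k+l}$ into $\mathrm{int}\,\mathcal{D}^+$, contradicting the definition of $\theta_{k+l}$ as a supremum. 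Having shown $\gamma_{\theta_{k+l}}^{k+l}$ avoids $\mathcal{D}^+$, there exist $\theta<\theta_{k+l}$ arbitrarily close to $\theta_{k+l}$ whose curves do enter $\mathcal{D}^+$; so if $\gamma_{\theta_{k+l}}^{k+l}$ entered $\mathrm{int}\,\mathcal{B}^+$ at finite $\eta$, then by continuity so would every nearby $\gamma_\theta^{k+l}$, which (being trapped in $\mathcal{B}^+$) would never enter $\mathcal{D}^+$ by the argument of the previous paragraph — a contradiction. Therefore along $\gamma_{\theta_{k+l}}^{k+l}$ one has $(k\sqrt{Z_1Z_2}+l\sqrt{Z_1Z_3}+(k+l)\sqrt{Z_2Z_3})Z_4\ge 2\Delta$ and $X_4\le\tfrac13$ for all $\eta$. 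I expect this bookkeeping with the supremum — in particular disposing of boundary touches of $\mathcal{D}^+$ via the strict sign in Lemmas~\ref{lem: invariant+}--\ref{lem: invariant-}, and simultaneously keeping the curve out of $\mathrm{int}\,\mathcal{B}^+$ so that compactness is available — to be the main technical obstacle.

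Finally I would identify the $\omega$-limit set. From $X_4\le\tfrac13$ and \eqref{eqn_trace=1}, \eqref{eqn: conservation Z spin(7) 2} we get $Z_1+Z_2+Z_3=\tfrac{1+X_4}{2}\le\tfrac23$, so $Z_1,Z_2,Z_3$ are bounded; since $V$ is non-increasing by \eqref{eqn_volume_derivative} and finite once the curve has left $P_0^{k+l}$, this bounds $Z_4$, hence all $X_j$ by \eqref{eqn: spin(7)}, so $\gamma_{\theta_{k+l}}^{k+l}$ stays in a compact set and is defined on $\mathbb{R}$. Its $\omega$-limit set $\Omega$ is then nonempty, compact, connected and invariant. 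The constraint $(k\sqrt{Z_1Z_2}+l\sqrt{Z_1Z_3}+(k+l)\sqrt{Z_2Z_3})Z_4\ge 2\Delta$ passes to $\Omega$, so $\Omega$ avoids $\{Z_4=0\}\supseteq\mathcal{C}_{G_2}$; together with boundedness of $V$ this forces $Z_1,Z_2,Z_3>0$ on $\Omega$ (otherwise $V\to\infty$ along an approaching sequence), so $V$ is positive and continuous on $\Omega$. LaSalle's invariance principle then gives $\Omega\subset\{\dot V=0\}=\{G=\tfrac17\}$, and the equality case of Cauchy--Schwarz applied to \eqref{eqn_trace=1} yields $\Omega\subset\{X_1=X_2=X_3=X_4=\tfrac17\}\cap\mathcal{C}^+_{\spin(7)}$. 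By the computation in the proof of Proposition~\ref{prop: spin7_cone_chirality}, this last set is exactly $\{P_{AC}^+\}$; hence $\Omega=\{P_{AC}^+\}$ and $\gamma_{\theta_{k+l}}^{k+l}$ converges to $P_{AC}^+$, while $\gamma_{\theta_l}^{l}$ and $\gamma_{\theta_k}^{k}$ converge to $P_{AC}^-$ by the same argument on $\mathcal{C}^-_{\spin(7)}$.
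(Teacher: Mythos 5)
Your proposal is correct and follows essentially the same strategy as the paper: trap $\gamma_{\theta_i}^i$ between the two invariant regions $\mathcal{D}^\pm$ and $\mathcal{B}^\pm$ via the supremum and continuous dependence, then identify the $\omega$-limit set using monotonicity of $Z_4/(Z_1^2Z_2^2Z_3^2)$, the Cauchy--Schwarz equality case, and the uniqueness of the Type V critical point on each $\mathcal{C}^\pm_{\spin(7)}$. You flesh out a few details the paper leaves implicit (the strict sign of the boundary derivative from Lemma~\ref{lem: invariant+}, the explicit incompatibility of staying in $\mathcal{B}^\pm$ forever with ever entering $\mathcal{D}^\pm$, and the LaSalle step), but the logical skeleton is the same as that of Lemma~\ref{lem_AC metrics}.
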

\begin{proof}
By Lemma \ref{lem_ALC metrics}, we have $\theta_i\geq \theta_i^*>0$.  By our discussion above, we also have $\theta_i\leq \pi-\arctan\left(\frac{1}{3i}\right)<\pi$. Hence $\theta_i\in(0,\pi)$.

Suppose a $\gamma_{\theta_i}^i$ eventually enters $\mathcal{D}^+\cup\mathcal{D}^-$ in finite time. By continuous dependence, the integral curve $\gamma_{\theta_i+\epsilon}^i$ eventually enters $\mathcal{D}^+\cup\mathcal{D}^-$ in finite time for a sufficiently small $\epsilon>0$. Since $\mathcal{D}^+\cup\mathcal{D}^-$ is invariant by Lemma \ref{lem: invariant+}-\ref{lem: invariant-}, this contradicts the definition of $\theta_i$. 

On the other hand, suppose $\gamma_{\theta_i}^i$ eventually enters $\mathcal{B}^+\cup\mathcal{B}^-$ in finite time. By continuous dependence, the integral curve $\gamma_{\theta_i-\epsilon}^i$ eventually enters $\mathcal{B}^+\cup\mathcal{B}^-$ in finite time for a sufficiently small $\epsilon>0$. As $\mathcal{B}^+\cup\mathcal{B}^-$ is invariant by Lemma \ref{lem_B is bad}, this also contradicts the definition of $\theta_i$. Hence, each $\gamma_{\theta_i}^i$ stays in the region between $\mathcal{D}^+\cup\mathcal{D}^-$ and $\mathcal{B}^+\cup\mathcal{B}^-$. In particular, the integral curve $\gamma_{\theta_{k+l}}^{k+l}$ remains in 
$$
\mathcal{A}^+:=\mathcal{C}_{\spin(7)}^+\cap \left\{\left(k\sqrt{Z_1Z_2}+l\sqrt{Z_1Z_3}+(k+l)\sqrt{Z_2Z_3}\right)Z_4\geq 2\Delta\right\}\cap \left\{Z_1+Z_2+Z_3\leq \frac{2}{3}\right\}.
$$
By the boundedness of $Z_j$ for $j\in\{1,2,3\}$ and \eqref{eqn_volume_derivative}, we know that $\gamma_{\theta_{k+l}}^{k+l}$ is in a compact subset of $\mathcal{A}^+$. Hence, the integral curve is defined on $\mathbb{R}$. Furthermore, by \eqref{eqn_volume_derivative} and the first defining inequality of $\mathcal{A}^+$, the function $\frac{Z_4}{Z_1^2Z_2^2Z_3^2}$ decreases to some positive limit. Hence $G\rightarrow \frac{1}{7}$ along $\gamma_{\theta_{k+l}}^{k+l}$. Cauchy--Schwarz inequality implies that the $\omega$-limit set of $\gamma_{\theta_{k+l}}^{k+l}$ is contained in $\{X_1=X_2=X_3=X_4=\frac{1}{7}\}$. Since the $\omega$-limit set is invariant, each $R_i$ is equal by the ODE system \eqref{eqn: new Einstein equation}. With the conservation law \eqref{eqn: new conservation set}, it is known that each $Z_i$ must be constant in the $\omega$-limit set. Therefore, the $\omega$-limit set of $\gamma_{\theta_{k+l}}^{k+l}$ is contained in $\{P_{AC}^+, P_{AC}^-\}$. By Proposition \ref{prop: spin7_cone_chirality} and Proposition \ref{prop_AC not in D}, the only critical points in $\mathcal{A}^+$ are $P_0^{k+l}$ and $P_{AC}^+$. Hence, the integral curve converges to $P_{AC}^+$.

Analogously, the integral curves $\gamma_{\theta_l}^{\,l}$ and $\gamma_{\theta_k}^{\,k}$ remain in
$$
\mathcal{A}^-:=\mathcal{C}_{\spin(7)}^-\cap \left\{\left(k\sqrt{Z_1Z_2}+l\sqrt{Z_1Z_3}+(k+l)\sqrt{Z_2Z_3}\right)Z_4\geq 2\Delta\right\}\cap \left\{Z_1+Z_2+Z_3\leq \frac{2}{3}\right\},
$$ 
and their $\omega$-limit sets lie in $\{P_{AC}^+,P_{AC}^-\}$.
By Proposition \ref{prop: spin7_cone_chirality} and Proposition \ref{prop_AC not in D},
the only critical points in $\mathcal{A}^-$ are $P_0^l$, $P_0^k$, and $P_{AC}^-$.  Therefore, both integral curves converge to $P_{AC}^-$.
\end{proof}

Theorem \ref{thm: main 1} is established by Lemma \ref{lem_ALC metrics}, Lemma \ref{lem_ALC asymp} and Lemma \ref{lem_AC metrics}.

We conclude by mentioning conically singular $\spin(7)$ metrics, whose principal orbit collapses to a point as $t\to 0$. Near the singular end these metrics are asymptotic to the Euclidean cone over the homogeneous Einstein $N_{k,l}$, while at infinity they exhibit ALC asymptotics. These metrics are represented by integral curves that emanate from $P_{AC}^\pm$. Such examples are already known in exceptional cases: for $N_{1,1}$, this includes the $\mathbb{A}_8$ metric in \cite{cvetic_cohomogeneity_2002} and the $\Gamma_0$ metric in \cite[Theorem 1.3]{chi_einstein_2020}. For $N_{1,0}$, the conically singular metric was constructed in \cite{lehmann2022geometric}. It is natural to expect that similar solutions should exist for general coprime pairs $(k,l)$. The main obstacle toward a uniform proof appears to be computational: the coordinates of $P_{AC}^\pm$ are determined by quartic equations. It would be interesting to understand whether these computational difficulties can be bypassed, perhaps by a more conceptual approach.

\section*{Acknowledgements}
The author thanks McKenzie Wang and Lorenzo Foscolo for their valuable discussions and helpful comments.

\section*{Conflict of Interest}
The author declares that there is no conflict of interest regarding the publication of this paper.

\section*{Data Availability}
Data sharing is not applicable as no datasets were generated or analysed for the present article.

\bibliography{BIB}

\begin{thebibliography}{CGLP02}

\bibitem[B{\"a}r93]{bar1993real}
C.~B{\"a}r.
\newblock Real killing spinors and holonomy.
\newblock {\em Communications in mathematical physics}, 154(3):509--521, 1993.

\bibitem[Baz07]{bazaikin_new_2007}
Ya.~V. Bazaikin.
\newblock On the new examples of complete noncompact {{Spin}}(7)-holonomy
  metrics.
\newblock {\em Siberian Mathematical Journal}, 48(1):8--25, 2007.

\bibitem[Baz08]{bazaikin_noncompact_2008}
Ya.~V. Bazaikin.
\newblock Noncompact {{Riemannian}} spaces with the holonomy group spin(7) and
  3-{{Sasakian}} manifolds.
\newblock {\em Proceedings of the Steklov Institute of Mathematics},
  263(1):2--12, 2008.

\bibitem[BDW15]{buzano_family_2015}
M.~Buzano, A.~S. Dancer, and M.~Y. Wang.
\newblock A family of steady {{Ricci}} solitons and {{Ricci}} flat metrics.
\newblock {\em Communications in Analysis and Geometry}, 23(3):611--638, 2015.

\bibitem[Bry87]{bryant_metrics_1987}
R.~L. Bryant.
\newblock Metrics with exceptional holonomy.
\newblock {\em The Annals of Mathematics}, 126(3):525--576, 1987.

\bibitem[BS89]{bryant_construction_1989}
R.~L. Bryant and S.~M. Salamon.
\newblock On the construction of some complete metrics with exceptional
  holonomy.
\newblock {\em Duke Mathematical Journal}, 58(3):829--850, 1989.

\bibitem[Cal79]{calabi1979metriques}
E.~Calabi.
\newblock M{\'e}triques k{\"a}hl{\'e}riennes et fibr{\'e}s holomorphes.
\newblock In {\em Annales scientifiques de l'{\'E}cole normale sup{\'e}rieure},
  volume~12, pages 269--294, 1979.

\bibitem[CGLP02]{cvetic_cohomogeneity_2002}
M.~Cveti{\v c}, G.~W. Gibbons, H.~L{\"u}, and C.~N. Pope.
\newblock Cohomogeneity one manifolds of {{Spin}}(7) and {$G_2$} holonomy.
\newblock {\em Physical Review D}, 65(10), 2002.

\bibitem[Chi21]{chi_einstein_2020}
H.~Chi.
\newblock Einstein metrics of cohomogeneity one with {$\mathbb{S}^{4m+3}$} as
  principal orbit.
\newblock {\em Communications in Mathematical Physics}, 2021.

\bibitem[Chi22]{chi2022spin}
H.~Chi.
\newblock {{Spin}}(7) metrics of cohomogeneity one with {A}loff--{W}allach
  spaces as principal orbits.
\newblock {\em The Journal of Geometric Analysis}, 32(5):144, 2022.

\bibitem[CL55]{coddington_theory_1955}
E.~A. Coddington and N.~Levinson.
\newblock {\em Theory of Ordinary Differential Equations}.
\newblock {McGraw-Hill Book Company, Inc., New York-Toronto-London}, 1955.

\bibitem[CR84]{castellani1984n}
L.~Castellani and L.~J. Romans.
\newblock N= 3 and n= 1 supersymmetry in a new class of solutions for d= 11
  supergravity.
\newblock {\em Nuclear Physics B}, 238(3):683--701, 1984.

\bibitem[CS02]{cleyton_cohomogeneity-one_2002}
R.~Cleyton and A.~Swann.
\newblock Cohomogeneity-one {$G_2$}-structures.
\newblock {\em Journal of Geometry and Physics}, 44(2-3):202--220, 2002.

\bibitem[EW00]{eschenburg_initial_2000}
J.-H. Eschenburg and M.~Y. Wang.
\newblock The initial value problem for cohomogeneity one {{Einstein}} metrics.
\newblock {\em The Journal of Geometric Analysis}, 10(1):109--137, 2000.

\bibitem[GPP90]{gibbons_einstein_1990}
G.~W. Gibbons, D.~N. Page, and C.~N. Pope.
\newblock Einstein metrics on {$S^3,\;{\bf R}^3$} and {${\bf R}^4$} bundles.
\newblock {\em Communications in Mathematical Physics}, 127(3):529--553, 1990.

\bibitem[GST03]{gukov2003conifold}
S.~Gukov, J.~Sparks, and D.~Tong.
\newblock Conifold transitions and five-brane condensation in {M}-theory on
  {{Spin}}(7) manifolds.
\newblock {\em Classical and Quantum Gravity}, 20(4):665, 2003.

\bibitem[Joy99]{joyce_new_1999}
D.~Joyce.
\newblock A new construction of compact 8-manifolds with holonomy {Spin}(7).
\newblock {\em Journal of Differential Geometry}, 53(1):89--130, 1999.

\bibitem[KS91]{kreck1991some}
M.~Kreck and S.~Stolz.
\newblock Some nondiffeomorphic homeomorphic homogeneous 7-manifolds with
  positive sectional curvature.
\newblock {\em Journal of Differential Geometry}, 33(2):465--486, 1991.

\bibitem[KV93]{kowalski_homogeneous_1993}
O.~Kowalski and Z.~Vl{\'a}{\v s}ek.
\newblock Homogeneous {{Einstein}} metrics on {{Aloff}}-{{Wallach}} spaces.
\newblock {\em Differential Geometry and its Applications}, 3(2):157--167,
  1993.

\bibitem[KY02]{kanno_spin7_2002-1}
H.~Kanno and Y.~Yasui.
\newblock On {{Spin}}(7) holonomy metric based on {{SU}}(3)/{{U}}(1): {{I}}.
\newblock {\em Journal of Geometry and Physics}, 43(4):293--309, 2002.

\bibitem[Leh22]{lehmann2022geometric}
F.~Lehmann.
\newblock Geometric transitions with {Spin(7)} holonomy via a dynamical system.
\newblock {\em Communications in Mathematical Physics}, 394(1):309--353, 2022.

\bibitem[PP84]{page1984new}
D.~N. Page and C.~N. Pope.
\newblock New squashed solutions of d= 11 supergravity.
\newblock {\em Physics Letters B}, 147(1-3):55--60, 1984.

\bibitem[Rei11]{reidegeld2011exceptional}
F.~Reidegeld.
\newblock Exceptional holonomy and {E}instein metrics constructed from
  {A}loff--{W}allach spaces.
\newblock {\em Proceedings of the London Mathematical Society},
  102(6):1127--1160, 2011.

\bibitem[Wan82]{wang_some_1982}
M.~Y. Wang.
\newblock Some examples of homogeneous {E}instein manifolds in dimension seven.
\newblock {\em Duke Mathematical Journal}, 49(1):23--28, 1982.

\end{thebibliography}
\bibliographystyle{alpha}

\end{document}